\newtheorem{thm}{Theorem}[section]
\newtheorem{lemma}[thm]{Lemma}
\newtheorem{prop}[thm]{Proposition}
\newtheorem{conj}[thm]{Conjecture}
\theoremstyle{definition}
\newtheorem{remark}[thm]{Remark}
\newcommand\ddd{\mathrm{d}}
\newcommand\dist{\mathrm{dist}}
\newcommand\bR{\mathbb{R}}
\newcommand\bC{\mathbb{C}}
\newcommand\bS{\mathbb{S}}
\def \l {\left}
\def \r {\right}
\begin{document}
\title[Strichartz-stability minimizer]{Stability for Strichartz inequalities: Existence of minimizers}

\author{Boning Di$^{1,2}$}
\author{Dunyan Yan$^3$}
\address{$^1$ School of Mathematical Sciences, Beijing University of Posts and Telecommunications}
\address{$^2$ Key Laboratory of Mathematics and Information Networks (Beijing University of Posts and Telecommunications), Ministry of Education}
\address{$^3$ School of Mathematical Sciences, University of Chinese Academy of Sciences}
\email{diboning@bupt.edu.cn}
\email{ydunyan@ucas.ac.cn}

\date{}
\thanks{This work was supported in part by the National Key R\&D Program of China [Grant No. 2023YFC3007303], National Natural Science Foundation of China [Grant Nos. 12071052 \& 12271501 \& 12471232 \& 12501130], and China Postdoctoral Science Foundation [Grant Nos. GZB20230812 \& 2024M753436 \& YJC20250892].}

\subjclass[2020]{Primary 42B10; Secondary 49J40, 35B38, 35Q41.}
\keywords{Fourier restriction, Strichartz inequality, quantitative stability, minimizer, concentration-compactness.}
\begin{abstract}
	We study the quantitative stability associated with the adjoint Fourier restriction inequality, focusing on the paraboloid and two-dimensional sphere cases. We show that these Strichartz-stability inequalities admit minimizers attaining their sharp constants, provided that these sharp constants are strictly smaller than the corresponding spectral-gap constants. Furthermore, for the two-dimensional sphere case, we obtain the existence of minimizers.
\end{abstract}

\maketitle

\tableofcontents

\section{Introduction}
The study of Fourier restriction phenomena plays a central role in harmonic analysis over the past decades. Roughly speaking, the well-known Paraboloid/Sphere Fourier Restriction Conjecture due to Stein asks the following question: for some given exponents $(p,q)$, do the associated $L^p\to L^q$ adjoint Fourier restriction inequalities hold? For the case $p=2$, this question was fully solved by Tomas-Stein \cite{Tomas1975}. In this paper, we focus on these Tomas-Stein inequalities for the paraboloid and sphere, which are also known as Strichartz inequalities \cite{Strichartz1977} since the space-time Fourier transform of PDE-solutions are supported on these surfaces.

For convenience, we introduce some necessary terminologies. Denoting the paraboloid/sphere surface $S \subset \bR^{d+1}$ and the associated adjoint Fourier operator $E_{S}$, then the Tomas-Stein inequalities state that
\begin{equation} \label{E:Strichartz General}
	\l\|E_Sf \r\|_{L^q(\bR^{d+1})} \leq C_{\sharp} \|f\|_{L^2(S)}, \quad q:=2+4/d, \quad C_{\sharp}:= \sup_{0\neq f\in L^2(S)} \frac{\l\|E_Sf \r\|_{L^q(\bR^{d+1})}}{\|f\|_{L^2(S)}}.
\end{equation}
Here the surfaces are equipped with their natural measure, and further details can be seen later. For these Strichartz inequalities, we introduce the associated \textit{Strichartz deficit} functional
\[\delta_{S}(f):= C_{\sharp}^2 \|f\|_{L^2(S)}^2 - \l\|E_Sf \r\|_{L^q(\bR^{d+1})};\]
and we call a nonzero function $f_0 \in L^2(S)$ a \textit{maximizer} for $C_{\sharp}$ if it satisfies $\delta_S(f_0) =0$; in addition, for a manifold $\mathcal{M} \subset L^2(S)$, we introduce the following manifold distance
\[\dist(f,\mathcal{M}):= \inf_{g\in \mathcal{M}} \l\|f-g\r\|_{L^2(S)}.\]

There are natural questions whether these Strichartz inequalities \eqref{E:Strichartz General} admit maximizers, and moreover, whether these Strichartz inequalities are stable in the sense that this Strichartz deficit functional $\delta_{S}(f)$ can control this distance from the maximizer manifold $\dist(f,\mathcal{M})$. In fact, the maximizers are conjectured to be Gaussian functions for the paraboloid case, and conjectured to be constant functions for the sphere case; while these two conjectures are only confirmed for the paraboloid with $d=\{1,2\}$ by Foschi \cite{Foschi2007} and Hundertmark-Zharnitsky \cite{HZ2006}, then confirmed for the two-dimensional sphere $\bS^2 \subset \bR^3$ by Foschi \cite{Foschi2015}. For more progress on this maximizer problems we refer to the survey papers \cite{FO2017,NOT2023,Oliveira2024}. Next, for the quantitative stability of Strichartz inequalities\footnote{Indeed, the precompactness of maximizing sequences can give a qualitative stability result, further details on this aspect are referred to \cite{BOQ2020,DY2024,CS2012A&P,FLS2016,Kunze2003,Ramos2012,Shao2009EJDE,Shao2016} and the references therein.}, Duyckaerts-Merle-Roudenko\cite{DMR2011} have shown the coercivity of a relevant quadratic form for the paraboloid with $d=\{1,2\}$; and recently, Gon\c{c}alves-Negro\cite{GN2022} have shown the following quantitative stability estimates for the paraboloid and sphere cases\footnote{They have also studied the wave equation case, which essentially corresponds to the cone case. See also the papers \cite{Goncalves2019,Goncalves2019JFA,Negro2023BLMS,Negro2023} for similar quantitative stability on Fourier restriction type inequalities. In a related direction, see \cite{CNO2024} for weighted-perturbation stability of maximizers in spherical extension inequalities.}: there exist $\eta\in (0,1)$ and $C_{\sharp\sharp}>0$ such that
\begin{equation} \label{E:Deficit general}
	\delta_{S}(f) \geq C_{\sharp\sharp} \dist(f,\mathcal{M})^2,
\end{equation}
under the condition that $\dist(f,\mathcal{M}) < \eta \|f\|_{L^2(S)}$; and this condition can be removed if the corresponding maximizer conjecture is confirmed. Similarly, we call a nonzero function $f_0 \in L^2(S)$ a \textit{minimizer} for $C_{\sharp\sharp}$ if it can make the inequality \eqref{E:Deficit general} an equality.

However, the results in \cite{GN2022} do not give too much information on the stability constants $C_{\sharp\sharp}$. On the one hand, what is the optimal value of this stability constant $C_{\sharp\sharp}$; on the other hand, whether this value $C_{\sharp\sharp}$ can be attained for a minimizer. As a comparison, we recall the corresponding research progress on the Sobolev inequality: the maximizers are exactly Aubin-Talenti manifold according to the celebrated works \cite{Aubin1976,Lieb1983,Rosen1971,Talenti1976}, then the Sobolev-stability questions are raised by Br\'ezis-Lieb \cite{BL1985} and the corresponding Sobolev stability estimate is established by Bianchi-Egnell \cite{BE1991}, recently the explicit stability constant is investigated by Dolbeault-Esteban-Figalli-Frank-Loss \cite{DEFFL2025} whose result is dimensionally sharp, and the existence of minimizers for stability constant is obtained by K\"onig \cite{Konig2025}. For further details on this aspect, we refer to the survey papers \cite{DEFFL2024,Frank2024} and the references therein, see also \cite{Christ2014,Christ2017,CLT2023,CLT2024,DSW2025,FMP2008,Fusco2015,FZ2022} for quantitative stability in different settings. Here we emphasize that in the Sobolev setting, by the work of \cite[Section 1.1]{Konig2025}, there are two crucial phenomena (two-peak and spectral-gap) which both play a role and thus lead to two compactness thresholds.

The main purpose of this paper is, based on the work \cite{GN2022} and motivated by the work \cite{Konig2025}, to \textbf{further investigate the quantitative Strichartz stability inequalities}. In this paper, we have established the following consequences: 
\noindent
\begin{itemize}
	\item for the sharp Strichartz stability constants  $C_{\sharp\sharp}$ in \eqref{E:Deficit general}, we compute the corresponding spectral-gap constants which directly give some upper bounds for $C_{\sharp\sharp}$;
	\item different from the Sobolev setting, for the existence of minimizers for $C_{\sharp\sharp}$, there is only one compactness threshold (the two-peak phenomenon is essentially vanished);
	\item for the existence of minimizers for $C_{\sharp\sharp}$, we discover an interesting fact that the sphere and paraboloid cases exhibit quite different behaviors: we obtain the existence of minimizers for the two-dimensional sphere, but only conditional existence of minimizers for the paraboloid.
\end{itemize}
Our main results are summarized as follows. Let $C_{*}$ and $C_{**}$ denote the sharp stability constants in \eqref{E:Deficit general} for the paraboloid and the two-dimensional sphere situation respectively, see also the precise expressions in the following \eqref{E:Strichartz stability-paraboloid} and \eqref{E:Strichartz stability-sphere}. Then we have
\begin{thm}[Sphere-stability minimizer] \label{T:Stability minimizer-2D sphere}
	There exists a minimizer for $C_{**}$.
\end{thm}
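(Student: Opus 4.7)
The plan is to apply a concentration--compactness argument to a minimizing sequence $\{f_n\}\subset L^2(\bS^2)$ for the stability quotient $Q(f):=\delta_{\bS^2}(f)/\dist(f,\mathcal{M})^2$. After normalising, I would decompose $f_n = g_n + \lambda_n h_n$, where $g_n\in\mathcal{M}$ is a nearest maximizer to $f_n$, $\lambda_n := \dist(f_n,\mathcal{M})$, and $\|h_n\|_{L^2(\bS^2)}=1$ with $h_n$ orthogonal to the tangent space $T_{g_n}\mathcal{M}$. Since Foschi's theorem identifies $\mathcal{M}$ with the constants on $\bS^2$, the rotation group acts trivially on $\mathcal{M}$, so one may take $g_n\equiv g_0$ for a fixed constant $g_0$; the remaining non-compact symmetries are the modulations $h(\omega)\mapsto e^{i\xi\cdot\omega}h(\omega)$ coming from spacetime translations of $E_{\bS^2}h$.

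Next I would pass to a weak subsequential limit $h_n\rightharpoonup h_*$ in $L^2(\bS^2)$ (after a suitable modulation-normalisation) and set up a dichotomy. If $h_*\ne 0$, then weak lower semicontinuity of $\delta_{\bS^2}$ near $g_0$, combined with a Br\'ezis--Lieb splitting of the $L^q$-norm on $\bR^3$, gives $Q(h_*)\le\liminf_n Q(f_n) = C_{**}$, and $h_*$ is the desired minimizer. If instead $h_*=0$, expanding $\delta_{\bS^2}(g_0+\lambda_n h_n)$ to second order in $\lambda_n$ forces $\liminf_n Q(f_n)\ge\Lambda$, where $\Lambda$ is the spectral-gap constant, namely the smallest Hessian eigenvalue of $\delta_{\bS^2}$ at $g_0$ restricted to $T_{g_0}\mathcal{M}^\perp$; so the trivial alternative is ruled out as soon as one knows the strict inequality $C_{**}<\Lambda$. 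A parallel and more standard argument, using a linear profile decomposition for the sphere extension operator, handles the loss of compactness along modulations $e^{i\xi_n\cdot\omega}$ with $|\xi_n|\to\infty$: because the spectral-gap phenomenon is the \emph{only} compactness threshold for this stability functional (the two-peak scenario being absent, as stressed in the introduction), the decomposition leaves a single non-trivial profile to analyse.

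The main obstacle, and the place where the sphere differs from the paraboloid, is establishing the strict inequality $C_{**}<\Lambda$. My plan here is to diagonalise the Hessian of $\delta_{\bS^2}$ at $g_0$ via spherical harmonics: the Funk--Hecke formula reduces $\|E_{\bS^2}(g_0+\varepsilon Y_\ell)\|_{L^4(\bR^3)}^4$ to an explicit sum indexed by the degree $\ell$, which Foschi's computation already controls. This yields an explicit formula for $\Lambda$ as a minimum over $\ell\ge 1$, and the strict inequality is then obtained by producing a concrete test configuration $f = g_0 + h$ whose stability ratio lies strictly below $\Lambda$ --- the natural candidate being a spherical harmonic of the degree that attains $\Lambda$, pushed to a finite (rather than infinitesimal) amplitude so that favorable higher-order corrections in the expansion of the deficit become active. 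The hard part will be controlling this finite-amplitude remainder sharply enough to beat $\Lambda$; the decisive input is the uniqueness of constants as maximizers on $\bS^2$, which ensures that the cubic correction has a definite, favorable sign in non-tangent directions.
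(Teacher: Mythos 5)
Your overall compactness framework matches the paper's: the proof is a concentration--compactness argument that rules out the spectral-gap alternative via the strict inequality $C_{**}<C_{SG*}$, with the two-peak alternative absent because $C_{SG*}<C_{TP*}$. The difficulty is entirely concentrated in the last sentence of your proposal, and that is where the gap lies.

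You assert that the strict inequality $C_{**}<C_{SG*}$ is obtained by pushing the spectral-gap test function to \emph{finite} amplitude, and that ``the decisive input is the uniqueness of constants as maximizers on $\bS^2$, which ensures that the cubic correction has a definite, favorable sign.'' Neither claim is accurate, and they cover the actual mechanism the proof needs. The paper's argument is purely \emph{infinitesimal}: taking $f_\varepsilon=1+\varepsilon Y_2^0$, one computes a Taylor expansion
\[
	\|\widehat{f_\varepsilon\sigma}\|_{L^4(\bR^3)}^4 = 256\pi^6 + \tfrac{384\pi^5}{5}\varepsilon^2 + \tfrac{256\sqrt{5}\pi^{9/2}}{49}\varepsilon^3 + o(\varepsilon^3),
\]
and the Rayleigh quotient becomes $E(f_\varepsilon)=\tfrac{8\pi^2}{5}-\tfrac{8\sqrt{5}\pi^{3/2}}{49}\varepsilon + o(\varepsilon)$, which is strictly less than $C_{SG*}=8\pi^2/5$ for small $\varepsilon>0$. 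No finite-amplitude remainder estimate is required; the quotient has a nonzero first-order derivative in $\varepsilon$ along the spectral-gap direction. The decisive arithmetic input is
\[
	\int_{\bS^2}\big(Y_2^0\big)^3\,\ddd\sigma = \frac{\sqrt{5}}{7\sqrt{\pi}} \neq 0,
\]
which makes the $\varepsilon^3$ coefficient of $\|\widehat{f_\varepsilon\sigma}\|_4^4$ nonvanishing. Note also that the sign of this cubic term does \emph{not} need to be ``definite in non-tangent directions''; all one needs is that it is nonzero, since replacing $Y_2^0$ by $-Y_2^0$ flips its sign, so for one choice of the sign of $\varepsilon$ the quotient decreases. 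This input is emphatically not a consequence of the uniqueness of constants as maximizers --- that is just Foschi's theorem and holds symmetrically to the paraboloid where Gaussians are likewise known to be the unique maximizers for $d=1,2$, yet the paper can only obtain a \emph{conditional} result there because the analogous strict inequality $C_*<C_{SG}$ is not available by this mechanism. Without identifying the nonvanishing third moment of $Y_2^0$ (or an equivalent concrete computation), your proof of $C_{**}<C_{SG*}$ does not close, and this is precisely the step that separates the unconditional sphere theorem from the conditional paraboloid theorem.

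A secondary, minor imprecision: in your dichotomy you claim that if $h_*\neq 0$ then $h_*$ itself is the minimizer; what one really shows is that the weak limit $f_0=g_0+\lambda_0 h_*$ of the original sequence (after normalizing out modulations) is the minimizer, since $Q$ is evaluated on $f$, not on the orthogonal component $h$. This is fixable, but worth flagging.
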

\begin{remark}
	In fact, our result implies that the minimizing sequence for $C_{**}$ cannot approach the maximizer manifold. Therefore, one cannot investigate the sharp constant $C_{**}$ by investigating the local behavior near the maximizer manifold.
\end{remark}
\begin{thm}[Paraboloid-stability minimizer] \label{T:Stability minimizer-paraboloid}
	There exists a minimizer for $C_{*}$, provided that
	\begin{equation} \label{T:Stability minimizer-paraboloid-1}
		C_{*} < \widetilde{C}_{d}, \quad \widetilde{C}_{d}:= \frac{d^2 +d +2}{(d+2)^3} 2^{\frac{2}{d+2}} \l(\frac{d}{d+2}\r)^{\frac{d^2}{2d+4}}.
	\end{equation}
	Note that the case $d\geq 3$ relies on the aforementioned Gaussian maximizer conjecture, which will be precisely stated later as Conjecture \ref{C:Gaussian maximizer}.
\end{thm}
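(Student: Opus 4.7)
My plan is a concentration-compactness analysis of a minimizing sequence $\{f_n\}$ for $C_*$, with the hypothesis $C_*<\widetilde C_d$ used to exclude the single obstruction to precompactness: the approach of $f_n$ to the Gaussian maximizer manifold $\mathcal{M}$ along a direction in $T_g\mathcal{M}^\perp$ realizing the spectral gap.

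\textbf{Setup.} Using the scaling invariance of $\delta_S(f)/\dist(f,\mathcal{M})^2$, I would normalize $\|f_n\|_{L^2(S)}=1$. For each $n$, choose a closest point $g_n\in\mathcal{M}$ and set $\alpha_n:=\dist(f_n,\mathcal{M})=\|f_n-g_n\|_{L^2}$. Acting on $f_n$ by the symmetry group of the paraboloid Strichartz inequality (space-time translations, parabolic dilations, Galilean boosts, modulations, rotations), I would arrange $g_n=g$ for a single fixed Gaussian $g$. Writing $f_n=g+\alpha_n r_n$ with $\|r_n\|_{L^2}=1$ and $r_n\perp T_g\mathcal{M}$, minimality reads $\delta_S(f_n)/\alpha_n^2\to C_*$.

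\textbf{Spectral gap excludes $\alpha_n\to 0$.} This is where the hypothesis enters. Taylor-expanding the Strichartz functional at the maximizer $g$, and using that $g$ is a critical point of $\delta_S$ on $\|\cdot\|_{L^2}^2$-level sets, gives
\[
\delta_S(g+\alpha r) = \alpha^2 Q(r,r) + O(\alpha^3 \|r\|_{L^2}^3),
\]
with $Q$ the second-variation quadratic form whose kernel on $L^2(S)$ is exactly $T_g\mathcal{M}$. The constant $\widetilde C_d$ in the statement coincides with the spectral gap of $Q$ on $T_g\mathcal{M}^\perp$, computed via the Hermite-function decomposition adapted to the Gaussian. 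Hence if $\alpha_n\to 0$, weak lower semicontinuity of $Q$ together with $r_n\perp T_g\mathcal{M}$ forces $\liminf_n \delta_S(f_n)/\alpha_n^2 \geq \widetilde C_d > C_*$, a contradiction; therefore $\alpha_n \geq \alpha_0 > 0$ along the minimizing sequence.

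\textbf{Compactness and extraction of a minimizer.} With $\alpha_n\to\alpha>0$ and $r_n\rightharpoonup r_\infty$ along a subsequence, I would apply a Br\'ezis--Lieb splitting to both $\|f_n\|_{L^2}^2$ and $\|E_S f_n\|_{L^q}^q$. If $r_\infty\neq 0$, strict sub-additivity of $x\mapsto x^{2/q}$ (applicable since $q>2$) at the level of $\|E_S f_n\|_{L^q}^2$ forces strong convergence and identifies $g+\alpha r_\infty$ as a minimizer. If $r_\infty=0$, a linear profile decomposition for $E_S$ (in the spirit of B\'egout--Vargas and Shao for the paraboloid extension operator) applied to $r_n$ splits it into asymptotically orthogonal bubbles escaping via the symmetry group; reapplying the spectral-gap analysis to each escaped bubble (whose closest point on $\mathcal M$ is again a Gaussian, by the symmetries) yields the lower bound $\widetilde C_d>C_*$ once more, a contradiction. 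The main obstacle will be the eigenvalue computation underlying the spectral-gap step: identifying $\widetilde C_d$ as the spectral gap of $Q$ on $T_g\mathcal{M}^\perp$ requires an explicit diagonalization using the Gaussian kernel, and for $d\geq 3$ the hypothesis that $\mathcal M$ consists exactly of Gaussians (Conjecture \ref{C:Gaussian maximizer}) is needed to characterize $T_g\mathcal{M}$ and to rule out phantom tangent directions.
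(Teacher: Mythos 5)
Your symmetry normalization and the step ruling out $\alpha_n\to 0$ via the spectral gap are in line with the paper's strategy, and the identification of $\widetilde C_d$ with the spectral-gap constant on $T_g\mathcal{M}^\perp$ is exactly Proposition \ref{P:Paraboloid spectral-gap}. However, your handling of the $r_\infty=0$ case has a genuine gap. You assert that one can ``reapply the spectral-gap analysis to each escaped bubble'' and thereby obtain $\liminf_n \delta_S(f_n)/\dist(f_n,\mathcal{M})^2 \geq \widetilde C_d$. This does not follow: the spectral gap is a strictly local, second-order statement that only applies to perturbations which are small in $L^2$ relative to the Gaussian they perturb, while the escaped profiles in $r_n$ are of unit $L^2$ size and need not be close to any element of $\mathcal{M}$. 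The quantity actually governing this escaping (two-bubble) scenario is the two-peak constant
\[
C_{TP}= \lim_{\lambda\to 0}\frac{\delta(G+G_\lambda)}{\dist(G+G_\lambda,\mathcal{G})^2}
= \l(2^{\frac{2}{d+2}}-1\r)\l(\frac{d}{d+2}\r)^{\frac{d^2}{2d+4}},
\]
which has a genuinely different form from $\widetilde C_d$ and is not controlled a priori by the hypothesis $C_*<\widetilde C_d$. The paper closes precisely this gap with Proposition \ref{P:Paraboloid two-peak vanishing}, an elementary but entirely separate inequality showing $C_{SG}<C_{TP}$ for all $d$; only with that does $C_*<C_{SG}$ also give $C_*<C_{TP}$ and exclude the dichotomy. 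Your opening claim that the spectral gap is ``the single obstruction to precompactness'' is thus the \emph{conclusion} of Proposition \ref{P:Paraboloid two-peak vanishing}, not a hypothesis, and your argument is incomplete without proving it.

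A smaller point: in the $\alpha_n\to 0$ step you invoke ``weak lower semicontinuity of $Q$,'' but $Q$ contains the term $-\frac{\partial^2}{\partial\varepsilon^2}\big|_0\|E_S(g+\varepsilon h)\|_q^2$, which is not obviously weakly lower semicontinuous, and in any case this is unnecessary: once $r_n\perp T_g\mathcal{M}$ and $\|r_n\|_2=1$, the spectral gap gives the pointwise bound $Q(r_n,r_n)\geq C_{SG}$ directly, and what one actually needs is uniformity of the Taylor remainder $O(\alpha^3\|r\|^3)$ over the minimizing sequence, which is what the paper's cited limit-version of \cite[Theorem 2.1]{GN2022} supplies.
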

\begin{remark}
	We indeed have shown that $C_{*}\leq \widetilde{C}_{d}$ must hold, and in fact, the equality $C_{*}= \widetilde{C}_{d}$ will imply the noncompactness\footnote{Note that this noncompactness does not imply the nonexistence of minimizers.} of minimizing sequences for $C_{*}$. We mention that, for this condition \eqref{T:Stability minimizer-paraboloid-1}, similar strict inequalities hold for the quantitative stability of Sobolev inequality \cite{Konig2023} and planar isoperimetric inequality \cite{BCH2017}, while their methods do not seem to work in our setting. 
\end{remark}

In the following two subsections, we present some precise terminologies on the paraboloid case and two-dimensional sphere case respectively. Meanwhile, we explain the main difficulties of our problem and the main methods of our proof. 
\subsection{Paraboloid adjoint Fourier restriction}
We recall the Fourier transform and Schr\"odinger operator
\[\widehat{f}(\xi):= \int_{\bR^d} e^{-2\pi ix \xi} f(x) \ddd x, \quad e^{it\Delta}f(x):= \int_{\bR^d} e^{2\pi i x\xi - 4\pi^2 it |\xi|^2} \widehat{f}(\xi) \ddd \xi.\]
Then the paraboloid situation Tomas-Stein/Strichartz inequality states that
\begin{equation} \label{E:Strichartz-paraboloid}
	\l\| e^{it\Delta} f\r\|_{L^q(\bR^{d+1})} \leq \mathbf{S}_d \|f\|_{L^2(\bR^d)}, \quad q:= 2+4/d, \quad \mathbf{S}_d:= \sup_{0\neq f\in L^2} \frac{\l\|e^{it\Delta} f\r\|_{L^q(\bR^{d+1})}}{\|f\|_{L^2(\bR^d)}}.
\end{equation}
We define the standard Gaussian function $G(x):= e^{-\pi |x|^2}$ and the Gaussian maximizer manifold
\[\mathcal{G}:= \l\{g: g=e^{it_0\Delta}\l[\lambda^{d/2} e^{2\pi i(x-x_0)\xi_0} e^{-\pi|\lambda(x-x_0)|^2}\r], \; (\lambda_0, \xi_0, x_0, t_0) \in \bR_{+} \times \bR^3\r\}.\]
\begin{conj}[Gaussian maximizer] \label{C:Gaussian maximizer}
	The maximizers for paraboloid Strichartz inequality \eqref{E:Strichartz-paraboloid} are exactly Gaussian-type functions $g\in \mathcal{G}$.
\end{conj}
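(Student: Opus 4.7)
The conjecture has been established only for $d\in\{1,2\}$, where the exponent $q=2+4/d$ equals $6$ and $4$ respectively; the case $d\geq 3$ is open. Any realistic plan must either extend the known methods beyond integer exponents or introduce a genuinely new ingredient, so what follows is necessarily a roadmap rather than a blueprint.

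The first natural route is to imitate Hundertmark-Zharnitsky's tensorization: for $d=2$ the identity $\|e^{it\Delta}f\|_{L^4}^4=\|(e^{it\Delta}f)^{\otimes 2}\|_{L^2(\bR^4)}^2$ realizes $\mathbf{S}_2^2$ as the norm of a positive, symmetry-covariant operator on a subspace of $L^2(\bR^4)$, which can be diagonalized explicitly and whose top eigenfunctions are Gaussians. For $d=1$, Foschi's Plancherel expansion delivers an analogous pointwise bilinear bound saturated only by Gaussians. I would first probe whether a multilinear identity realizing $\|e^{it\Delta}f\|_{L^q}^q$ as an inner product on $\bR^{kd}$ for some integer $k$ can be engineered for general $d$. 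This fails immediately when $q$ is not an even integer, which is precisely the obstruction for $d\geq 3$.

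Absent such an algebraic miracle, I would turn to a flow-based attack. Existence of a maximizer $f_{*}$ is known by concentration-compactness (Shao and subsequent refinements), so one can write the Euler-Lagrange equation
\begin{equation*}
\mu f_{*} \;=\; \int_{\bR} e^{-it\Delta}\bigl(|e^{it\Delta}f_{*}|^{q-2}\,\overline{e^{it\Delta}f_{*}}\bigr)\,\ddd t,
\end{equation*}
and try to prove that $f_{*}$ inherits the full symmetry group of $\mathcal{G}$ (spatial translations, Galilean boosts, parabolic rescalings, and Schr\"odinger-time-translations). A heat-flow monotonicity in the spirit of Carlen-Loss would be the companion tool: one would hope to show that
\begin{equation*}
s \;\longmapsto\; \frac{\l\|e^{it\Delta}\bigl(e^{s\Delta}f\bigr)\r\|_{L^q(\bR^{d+1})}}{\l\|e^{s\Delta}f\r\|_{L^2(\bR^d)}}
\end{equation*}
is non-decreasing, with Gaussians as its only stationary points, which combined with concentration-compactness would force any maximizer into $\mathcal{G}$.

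The hard part, and the reason Conjecture \ref{C:Gaussian maximizer} remains open in dimensions three and higher, is proving such monotonicity (or any comparable sharp functional inequality) without the aid of an even-integer exponent. All proofs in the integer cases reduce via Plancherel to a positive bilinear or multilinear object; that reduction is simply unavailable in the fractional regime. I expect any successful proof to rest on a genuinely new quantitative expression, whether via holomorphic interpolation in $q$, a Bennett-Carbery-type multilinear-Kakeya bound, or a Bellman-function style convexity, that captures the sharp constant structure without relying on the Plancherel identity.
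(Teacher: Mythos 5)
This statement is a \emph{conjecture}, not a theorem, and the paper offers no proof of it: the authors explicitly note that it is confirmed only for $d\in\{1,2\}$ (by Foschi and Hundertmark--Zharnitsky respectively) and remains open for $d\geq 3$, and they use it purely as a hypothesis in Theorem~\ref{T:Stability minimizer-paraboloid}. You correctly recognize this and respond with a roadmap rather than a purported proof, which is the right call. Your survey of the obstruction is accurate: the even-integer exponents $q=6$ (for $d=1$) and $q=4$ (for $d=2$) allow $\|e^{it\Delta}f\|_{L^q}^q$ to be rewritten via Plancherel as a positive multilinear form, and this reduction is simply unavailable when $q=2+4/d$ is fractional. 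The heat-flow monotonicity direction you propose (in the spirit of Carlen--Loss, and explored by Bennett, Bez, Carbery, and Hundertmark for Strichartz norms) is indeed the main known alternative strategy, and your assessment that some genuinely new quantitative expression beyond Plancherel would be required is consistent with the state of the literature. There is nothing to compare against in the paper itself; the content here is an honest summary of why the problem is open, which is exactly what a conjecture warrants.
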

As mentioned above, this conjecture has been confirmed for dimensions $d=\{1,2\}$ and remains open for higher dimensions $d\geq 3$. Thus there holds $\mathbf{S}_1= 12^{-1/12}$ and $\mathbf{S}_2=2^{-1/2}$. We define the following paraboloid Strichartz deficit functional and the Gaussian distance functional
\[\delta(f):= \mathbf{S}_d^2 \|f\|_{L^2(\bR^d)} - \l\|e^{it\Delta}f \r\|_{L^q(\bR^{d+1})}^2, \quad \dist(f,\mathcal{G}):= \inf_{g\in \mathcal{G}} \l\|f-g\r\|_{L^2(\bR^d)}.\]
The work of Gon\c{c}alves-Negro \cite[Theorem 1.2]{GN2022} shows that this deficit functional can control the Gaussian distance functional: more precisely, if the Gaussian maximizer Conjecture \ref{C:Gaussian maximizer} holds, then there exists a constant $C_{*}>0$ such that
\begin{equation} \label{E:Strichartz stability-paraboloid}
	\delta(f) \geq C_{*} \dist(f,\mathcal{G})^2, \quad C_{*}:= \inf_{0\neq f \in L^2} \frac{\delta(f)}{\dist(f,\mathcal{G})^2}.
\end{equation}
To establish the existence of minimizers for this paraboloid Strichartz stability inequality \eqref{E:Strichartz stability-paraboloid}, as stated in \cite{Konig2025}, we have the following two natural obstacles
\begin{itemize}
	\item \textbf{Spectral-gap:} the minimizing sequences might converge to the maximizer manifold.
	\item \textbf{Two-peak:} the minimizing sequences might consist of two maximizer profiles which are non-interacting in the limit sense.
\end{itemize}
Next we explain these two cases one by one.

First for the spectral-gap phenomenon, we recall some facts. To establish this quantitative stability result \eqref{E:Strichartz stability-paraboloid}, Gon\c{c}alves-Negro \cite[Section 2]{GN2022} has made use of the corresponding tangent space and its orthogonal complement space. Let $T_{G}\mathcal{G}$ be the tangent space of the manifold $\mathcal{G}$ at the Gaussian function $G$ and denote its orthogonal complement space as $T_{G}\mathcal{G}^{\perp}$. Then by direct calculation, see also \cite[Equation (4-1)]{GN2022}, there holds
\[T_{G}\mathcal{G}= \mathrm{span}_{\bC} \l\{e^{-\pi|x|^2}, x_1 e^{-\pi|x|^2}, x_2 e^{-\pi |x|^2}, \cdots, x_d e^{-\pi|x|^2}, |x|^2 e^{-\pi|x|^2}\r\}.\]
Here we define the following spectral-gap constant
\[C_{SG}:= \inf_{h\in T_{G}\mathcal{G}^{\perp}} \frac{\delta''(G)[h,h]}{2 \|h\|_2^2}, \quad \delta''(f)[h,h]:= \frac{\partial^2}{\partial \varepsilon^2}\Big|_{\varepsilon=0} \delta(f+\varepsilon h).\]
By this definition, if $h$ is a minimizer for $C_{SG}$ then one can conclude\footnote{Notice that we times one-half in the definition of spectral-gap constant to make sure this fact holds true. In addition, in the Proof of Proposition \ref{P:Paraboloid spectral-gap}, Step 3, we can show that this spectral gap constant $C_{SG}$ can be attained by taking the second order radial Hermite-Gaussian function.}
\[h\perp G \quad \Rightarrow \quad \delta(G+\varepsilon h)= C_{SG}\|h\|_{L^2(\bR^d)}^2 \varepsilon^2 + o(\varepsilon^2),\]
thus one can directly see that $C_{*}\leq C_{SG}$. Indeed, Gon\c{c}alves-Negro \cite[Proof of Theorem 1.2]{GN2022} has shown that $C_{SG}>0$. In this paper, we can further investigate some monotonicity to show the following Proposition \ref{P:Paraboloid spectral-gap}, which guarantees that the spectral-gap obstacle cannot appear due to the the condition \eqref{T:Stability minimizer-paraboloid-1}.
\begin{prop}[Paraboloid spectral-gap constant] \label{P:Paraboloid spectral-gap}
	The explicit value of the spectral-gap constant is
	\[C_{SG}= \widetilde{C}_d = \frac{d^2 +d +2}{(d+2)^3} 2^{\frac{2}{d+2}} \l(\frac{d}{d+2}\r)^{\frac{d^2}{2d+4}}.\]
\end{prop}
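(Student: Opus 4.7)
The plan is to compute $\delta''(G)[h,h]$ in closed form and then diagonalize it in a Hermite-Gaussian basis, so that the infimum defining $C_{SG}$ reduces to a spectral minimization over a discrete family of explicit eigenvalues. Set $\Psi := e^{it\Delta}G$ and $\psi:=e^{it\Delta}h$. Since $G$ is a maximizer, the first variation vanishes, and expanding $(\int|\Psi+\varepsilon\psi|^q)^{2/q}$ to order $\varepsilon^2$ via the chain rule yields a quadratic form of the schematic type
\[A\int_{\bR^{d+1}}|\Psi|^{q-2}|\psi|^2\,\ddd x\,\ddd t + B\int_{\bR^{d+1}}|\Psi|^{q-4}\,\mathrm{Re}(\overline{\Psi}\psi)^2\,\ddd x\,\ddd t,\]
with $A,B$ absolute constants depending on $d$ and $q$. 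Subtracting this from $\mathbf{S}_d^2\|h\|_2^2$ produces exactly the second-variation bilinear form analyzed in \cite[Section 2]{GN2022}.

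Next, I would expand $h$ in the orthonormal Laguerre–Hermite basis $\{h_{k,\ell,m}\}$, where $k\in\bN$ indexes the radial Laguerre level, $\ell\in\bN$ is the total angular degree, and $m$ labels spherical harmonics of degree $\ell$. This basis diagonalizes $\delta''(G)$: the Schr\"odinger propagator acts on each Hermite-Gaussian by scalar multiplication combining chirp, scaling and phase, while $\Psi$ itself remains Gaussian, so every arising integral reduces to an explicit Gaussian moment. Writing $\delta''(G)[h_{k,\ell,m},h_{k,\ell,m}]/(2\|h_{k,\ell,m}\|_2^2)=\lambda_{k,\ell}$, one then reads off that $T_G\mathcal{G}$ is precisely the span of the modes $(k,\ell)\in\{(0,0),(0,1),(1,0)\}$ (after accounting for the obvious complex-linear doubling), so the spectral-gap infimum reduces to $\inf\{\lambda_{k,\ell}: (k,\ell)\notin\{(0,0),(0,1),(1,0)\}\}$.

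The remaining work is a two-variable monotonicity check: to show that $\lambda_{k,\ell}$ is nondecreasing in each of $k$ and $\ell$ outside the excluded tangent indices, and to explicitly evaluate $\lambda_{2,0}$. This identifies the minimizer as the second-order radial Hermite-Gaussian (essentially $L_2^{(d/2-1)}(2\pi|x|^2)\,G(x)$ up to normalization), whose eigenvalue, after a Beta-function computation, collapses to the announced value $\widetilde{C}_d$. The main obstacle is precisely the monotonicity comparison: because the quadratic form weights each mode by $|\Psi|^{q-2}$ with time-dependent complex Gaussian variance, abstract harmonic-oscillator spectral theory does not apply directly, and one must estimate ratios of Laguerre moment integrals across distinct $(k,\ell)$. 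Once this monotonicity is in hand, the identification $C_{SG}=\lambda_{2,0}=\widetilde{C}_d$ follows from direct arithmetic, establishing Proposition \ref{P:Paraboloid spectral-gap}.
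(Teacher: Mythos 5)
Your outline coincides with the paper's strategy: reduce to the Laguerre--Hermite diagonalization of $\delta''(G)$ (which the paper imports from \cite[Pages 1119--1123]{GN2022} as formula \eqref{E:spectral-gap-0.7}), then argue that the eigenvalues are monotone so the infimum sits at the second radial mode, and finally compute that eigenvalue. The tangent-space identification and the claim that the extremal mode is $L_2^{d/2-1}(2\pi|x|^2)e^{-\pi|x|^2}$ both match the paper's Step 3.

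However, the step you flag as ``the main obstacle'' --- the monotonicity of the diagonal coefficients --- is precisely where almost all the work in the paper's proof lives, and your proposal does not supply it. Concretely, the diagonal coefficient is $1-c_d(m)$ with
\[
c_d(m)= \frac{q}{2}\sum_{j=0}^m \binom{m+d/2-1}{m-j}\binom{m}{j}(1-2/q)^{2m-2j}(2/q)^{2j},
\]
and you need $c_d(m)$ decreasing for $m\geq 2$. The paper does this via two quite different arguments: for $d\in\{1,2\}$ an integral representation derived from the trigonometric identity $\int_0^\pi\cos^{2k}\theta\,\ddd\theta=\pi(2k)!/(4^k(k!)^2)$, yielding $c_1(m)=\frac{1}{2\pi}\int_0^\pi[((2\cos\theta+1)/3)^{2m}+((2\cos\theta-1)/3)^{2m}]\,\ddd\theta$; and for $d\geq 3$ the identity $c_d(m)=(1+2/d)y^mP_m^{(0,d/2-1)}(x)$ together with the three-term Jacobi recurrence and an induction showing the ratio $P_{m+1}^{(0,d/2-1)}(x)/P_m^{(0,d/2-1)}(x)<y^{-1}$. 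Describing this as ``estimate ratios of Laguerre moment integrals'' is not a proof, and the specific mechanisms above are not guessable from the outline. A second, smaller point: you propose a two-variable monotonicity in $(k,\ell)$ over the full Hermite--Gaussian basis, whereas the paper simply quotes the reduction to the radial sector from \cite{GN2022}; if you really want a self-contained two-variable argument, you would also need to verify that the quadratic form diagonalizes across angular sectors and that it is monotone in $\ell$, neither of which your sketch actually establishes --- the term $\mathrm{Re}(\bar\Psi\psi)^2$ couples modes in a way that requires the lens-transform orthogonality argument, not just ``Gaussian moments.''
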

Second for the two-peak phenomenon, we define the paraboloid two-peak function $f_{\lambda}$ and two-peak constant $C_{TP}$ as follows
\[f_{\lambda}(x):= G(x)+ G_{\lambda}(x), \quad G_{\lambda}(x):= \lambda^{d/2}G(\lambda x),\quad C_{TP}:= \lim_{\lambda\to 0}\frac{\delta(f_{\lambda})}{\dist(f_{\lambda},\mathcal{G})^2}.\]
By these definitions one can directly see that $C_{*}\leq C_{TP}$. In fact, we can compute that
\begin{prop}[Paraboloid two-peak constant] \label{P:Paraboloid two-peak}
	The explicit value of the two-peak constant is
	\[C_{TP}= \l(2^{\frac{2}{d+2}} -1\r) \l(\frac{d}{d+2}\r)^{\frac{d^2}{2d+4}}.\]
\end{prop}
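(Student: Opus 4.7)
The plan is to compute the limits of $\delta(f_\lambda)$ and $\dist(f_\lambda,\mathcal{G})^2$ separately as $\lambda\to 0$, then divide.

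For the deficit limit, direct Gaussian integration yields $\langle G,G_\lambda\rangle = \lambda^{d/2}(1+\lambda^2)^{-d/2}\to 0$, hence $\|f_\lambda\|_{L^2}^2 \to 2\|G\|_{L^2}^2$. For the $L^q$-norm the key tool is the scaling identity
\[
e^{it\Delta}G_\lambda(x) = \lambda^{d/2}\,(e^{i\lambda^2 t\Delta}G)(\lambda x),
\]
which, combined with the scale-criticality of $q=2+4/d$, gives both $\|e^{it\Delta}G_\lambda\|_{L^q} = \|e^{it\Delta}G\|_{L^q}$ and the a.e.\ vanishing $e^{it\Delta}G_\lambda(x,t)\to 0$. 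Applying the Brezis--Lieb lemma to the uniformly $L^q$-bounded sequence $\{e^{it\Delta}f_\lambda\}$ with a.e.\ limit $e^{it\Delta}G$ then yields $\|e^{it\Delta}f_\lambda\|_{L^q}^q \to 2\|e^{it\Delta}G\|_{L^q}^q$. Combining with the Gaussian-maximizer identity $\|e^{it\Delta}G\|_{L^q}^2 = \mathbf{S}_d^2\|G\|_{L^2}^2$ gives
\[
\delta(f_\lambda)\;\longrightarrow\; \bigl(2-2^{d/(d+2)}\bigr)\mathbf{S}_d^2\|G\|_{L^2}^2.
\]

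For the distance limit, the choice $g=G\in\mathcal{G}$ gives the easy upper bound $\dist(f_\lambda,\mathcal{G})^2\le \|G_\lambda\|_{L^2}^2 = \|G\|_{L^2}^2$. For the matching lower bound, I would argue by contradiction via concentration-compactness: if some sequence $g_n\in\mathcal{G}$ with $\lambda_n\to 0$ satisfied $\|f_{\lambda_n}-g_n\|_{L^2}^2 \le \|G\|_{L^2}^2 - \varepsilon$, one parametrizes $g_n$ by translation, modulation, scaling and Schr\"odinger flow and extracts subsequences with two alternatives: either the group parameters remain bounded and $g_n\to g_\infty\in\mathcal{G}$ strongly, or they escape and $g_n\rightharpoonup 0$ weakly. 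Since $G_{\lambda_n}\rightharpoonup 0$, the first alternative yields $\|f_{\lambda_n}-g_n\|_{L^2}^2\to \|G\|_{L^2}^2 + \|G-g_\infty\|_{L^2}^2\ge \|G\|_{L^2}^2$, while the second yields the limit $3\|G\|_{L^2}^2$; both contradict the assumption. The one delicate subcase is when the scale parameter of $g_n$ tracks $\lambda_n$ (so $g_n$ shadows $G_{\lambda_n}$); this reduces to the convergent case via the scaling symmetry $\dist(f_\lambda,\mathcal{G}) = \dist(f_{1/\lambda},\mathcal{G})$, itself a consequence of the scale-invariance of $\mathcal{G}$.

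Assembling the ratio, $C_{TP} = (2-2^{d/(d+2)})\mathbf{S}_d^2$. Direct evaluation using $e^{it\Delta}G(x) = (1+4\pi it)^{-d/2}e^{-\pi|x|^2/(1+4\pi it)}$ and explicit Gaussian integration gives $\mathbf{S}_d^q = \tfrac12\bigl(\tfrac{d}{d+2}\bigr)^{d/2}$, hence $\mathbf{S}_d^2 = 2^{-d/(d+2)}\bigl(\tfrac{d}{d+2}\bigr)^{d^2/(2d+4)}$; using the factorization $2-2^{d/(d+2)} = 2^{d/(d+2)}(2^{2/(d+2)}-1)$ the $2^{\pm d/(d+2)}$ factors cancel and leave the claimed value $(2^{2/(d+2)}-1)(d/(d+2))^{d^2/(2d+4)}$. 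The main obstacle is the lower bound on the distance: one must exclude ``intermediate-scale'' Gaussians that could a priori beat both $g=G$ and $g=G_\lambda$, and it is the combination of the $\lambda\leftrightarrow 1/\lambda$ symmetry with the asymptotic orthogonality $\langle G,G_\lambda\rangle\to 0$ that carries this through.
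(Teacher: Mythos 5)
Your proof reaches the correct constant and is sound in outline, but it takes a genuinely different route from the paper's. For the $L^q$ asymptotics you exploit the parabolic scaling $e^{it\Delta}G_\lambda(x)=\lambda^{d/2}(e^{i\lambda^2t\Delta}G)(\lambda x)$, scale-criticality of $q$, and the Brezis--Lieb lemma to get $\|e^{it\Delta}f_\lambda\|_q^q\to 2\|e^{it\Delta}G\|_q^q$ with essentially no computation; the paper instead estimates the cross terms $\int|e^{it\Delta}G|^{q-1}|e^{it\Delta}G_\lambda|$ directly via dispersive estimates, which costs more work but also yields the quantitative rate $O(\lambda^{d/(d+2)})$. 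For the distance you take $g=G$ to get the upper bound $\dist(f_\lambda,\mathcal{G})^2\le\|G\|_2^2$ and a concentration-compactness dichotomy on the symmetry group of $\mathcal{G}$ for the matching lower bound; the paper instead uses the representation $\dist^2=\|f\|_2^2-m(f)$ from Lemma \ref{L:Gaussian distance representation}, a radial rearrangement argument reducing $m(f_\lambda)$ to $\sup_{\mu>0}(\int f_\lambda G_\mu)^2$, and an explicit analysis of $H_\lambda(\mu)=\langle G,G_\mu\rangle+\langle G_\lambda,G_\mu\rangle$ via the implicit function theorem, obtaining the optimal scale $\mu(\lambda)=1+2^{d/2}\lambda^{d/2}+o(\lambda^{d/2})$. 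Your observation that only leading-order limits of numerator and denominator are needed (both are positive constants) is legitimate and makes the paper's tracking of the $\lambda^{d/2}$ corrections in Steps 1 and 3 superfluous for this particular proposition. Two places in your distance argument need more care than the sketch suggests: in the ``escape'' alternative the claimed limit $3\|G\|_2^2$ silently assumes $\|g_n\|_2\to\|G\|_2$ (though dropping $\|g_n\|_2^2\ge 0$ still gives the contradiction $\ge 2\|G\|_2^2$); and the dichotomy ``bounded parameters or $g_n\rightharpoonup 0$'' does not by itself yield $\langle G_{\lambda_n},g_n\rangle\to 0$ --- the cases where the scale parameter of $g_n$ sits strictly between $1$ and $\lambda_n$, or where the Schr\"odinger-flow parameter $t_n$ couples to the scale through the pseudo-conformal symmetry, require the full asymptotic-orthogonality analysis from the profile decomposition literature you cite, and your $\lambda\leftrightarrow\lambda^{-1}$ rescaling disposes only of the exactly-tracking case. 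The paper's rearrangement reduction sidesteps all of this by collapsing the group-parameter optimization to a single scalar variable $\mu$.
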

In summary, if there holds the strict inequality $C_{*}< \min\{C_{SG},C_{TP}\}$, then the spectral-gap and two-peak both cannot happen, and thus it is expected that the existence of minimizers can be established. In fact, by some direct computation, we can prove that the two-peak vanishes in our Strichartz setting, which comes from the following result.
\begin{prop}[Paraboloid two-peak vanishing] \label{P:Paraboloid two-peak vanishing}
	The spectral-gap constant and the two-peak constant satisfy the following relation
	\[C_{SG}<C_{TP}.\]
\end{prop}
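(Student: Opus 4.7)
My strategy is to reduce the claimed inequality to an elementary polynomial comparison, taking full advantage of the explicit formulas just obtained. Applying Propositions \ref{P:Paraboloid spectral-gap} and \ref{P:Paraboloid two-peak} and dividing both sides of $C_{SG}<C_{TP}$ by the common positive factor $(d/(d+2))^{d^2/(2d+4)}$, the inequality becomes
$$\frac{d^2+d+2}{(d+2)^3}\, 2^{2/(d+2)} \;<\; 2^{2/(d+2)} - 1.$$
Rearranging and using the algebraic identity $(d+2)^3 - (d^2+d+2) = d^3 + 5d^2 + 11d + 6$, this is equivalent to
$$2^{2/(d+2)} \bigl(d^3 + 5d^2 + 11d + 6\bigr) \;>\; (d+2)^3 \qquad \text{for all } d\geq 1.$$

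To establish this displayed inequality, I would apply the convexity estimate $2^x \geq 1 + (\ln 2)\,x$ at $x = 2/(d+2)$. Combined again with the identity $(d+2)^3 - (d^3+5d^2+11d+6) = d^2+d+2$, this reduces the task to verifying the purely polynomial statement
$$(2\ln 2)\bigl(d^3 + 5d^2 + 11d + 6\bigr) \;>\; (d+2)(d^2+d+2) \;=\; d^3+3d^2+4d+4.$$
Since $2\ln 2 > 1.386$, a coefficient-by-coefficient comparison confirms this for every $d\geq 1$, and in fact for every real $d\geq 0$, completing the argument.

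I do not anticipate any genuine obstacle here: the work is essentially bookkeeping once the explicit constants from Propositions \ref{P:Paraboloid spectral-gap} and \ref{P:Paraboloid two-peak} are in hand. The only point that warrants mild care is ensuring the tangent-line bound on $2^x$ is not too lossy as $d\to\infty$; this is precisely where the strict positivity of $2\ln 2 - 1 \approx 0.386$ provides the required uniform slack, since a direct expansion shows that the difference between the two sides of the displayed inequality scales like $(2\ln 2 - 1)\,d^2$ as $d\to\infty$. If one prefers an even more pedestrian route, the finitely many small cases $d\in\{1,2,3\}$ can be checked by hand and matched with the asymptotic analysis.
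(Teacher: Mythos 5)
Your proof is correct and follows essentially the same strategy as the paper: extract the common factor $(d/(d+2))^{d^2/(2d+4)}$, then reduce the remaining transcendental inequality to a polynomial one via a standard convexity/Taylor bound on the exponential and verify by comparing coefficients. The only cosmetic difference is that the paper applies the second-order bound $1-e^{-x}\geq x-x^2/2$ to $1-2^{-2/(d+2)}$ while you rearrange first and then get away with the weaker tangent-line bound $2^x\geq 1+x\ln 2$; both reduce to a polynomial positivity check in $d$ with all coefficients manifestly positive.
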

\begin{remark}
	This conclusion reveals that, unlike the Sobolev setting studied by K\"onig in \cite{Konig2023,Konig2025} where both the two-peak and spectral-gap must be surmounted, the two-peak obstacle vanishes within our Strichartz setting, thereby reducing the problem solely to overcoming spectral-gap.
\end{remark}
\begin{proof}[\textbf{Proof of Proposition \ref{P:Paraboloid two-peak vanishing}}]
	The proof follows from direct computation. To show the desired conclusion, by extracting the common factor, it is enough to show
	\[2^{\frac{2}{d+2}} \frac{d^2+d+2}{(d+2)^3}>2^{\frac{2}{d+2}} -1 \quad \Longleftrightarrow \quad 1-2^{-\frac{2}{d+2}}> \frac{d^2 +d +2}{(d+2)^3}.\]
	Applying the inequality $1-e^{-x}\geq x-x^2/2$ with $x:= 2\ln 2/(d+2)$, we can obtain
	\[1-2^{-\frac{2}{d+2}} \geq \frac{2\ln 2}{d+2} - \frac{2(\ln 2)^2}{(d+2)^2}.\]
	Then we can further conclude that
	\begin{align*}
		1-2^{-\frac{2}{d+2}} - \frac{d^2 +d +2}{(d+2)^3} & \geq (d+2)^{-3} \l[2\ln 2(d+2)^2 -2(\ln 2)^2 (d+2) - d^2-d-2\r] \\
		&= (2\ln 2-1) d^2 +\l[8\ln2 -2(\ln 2)^2 -1\r] d + [8\ln 2 -4(\ln 2)^2 -2] >0.
	\end{align*}
	This implies the desired result and completes the proof.
\end{proof}

\subsection{Two-dimensional sphere adjoint Fourier restriction}
In order to be consistent with existing literature, for the spherical measure $\sigma$, we denote the spherical Fourier transform
\[\widehat{f\sigma}(x):= \int_{\bS^{d-1}} f(\theta) e^{-ix \theta} \ddd \sigma(\theta), \quad x\in \bR^d.\]
Then the sphere situation Tomas-Stein/Strichartz inequality states that
\begin{equation} \label{E:Strichartz sphere}
	\l\| \widehat{f\sigma} \r\|_{L^q(\bR^{d})} \leq \mathbf{M}_d \|f\|_{L^2(\bS^{d-1})}, \quad q:= 2+4/(d-1), \quad \mathbf{M}:= \sup_{0\neq f\in L^2} \frac{\l\|\widehat{f\sigma} \r\|_{L^q(\bR^3)}}{\|f\|_{L^2(\bS^{2})}}.
\end{equation}
We define the constant maximizer manifold
\[\mathcal{C}:= \l\{g: g=\lambda e^{iy\theta}, \; (\lambda,y) \in \bR_{+} \times \bR^3\r\}.\]
\begin{conj}[Constant maximizer] \label{C:Constant maximizer}
	The maximizers for sphere Strichartz inequality \eqref{E:Strichartz sphere} are exactly constant-type functions $g\in \mathcal{C}$.
\end{conj}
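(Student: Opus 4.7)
The plan is to focus on the case $\bS^2$ (which in the paper's notation corresponds to $d=3$ and $q=4$), the setting where the conjecture is known and for which the bilinear Foschi-type method succeeds; for higher-dimensional spheres the conjecture remains open and appears to require genuinely different ideas. First I would use the $L^4$--Plancherel identity
\[\|\widehat{f\sigma}\|_{L^4(\bR^3)}^4 = (2\pi)^3 \|f\sigma * f\sigma\|_{L^2(\bR^3)}^2,\]
which converts the sharp inequality \eqref{E:Strichartz sphere} into a sharp $L^2$ bound on the self-convolution $f\sigma*f\sigma$.

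Next I would compute $f\sigma*f\sigma$ explicitly. For $|x|\leq 2$ the set of $\omega\in\bS^2$ with $x-\omega\in\bS^2$ is the circle $C(x)$ obtained by intersecting $\bS^2$ with the affine plane perpendicular to $x$ through $x/2$; a Jacobian computation produces a formula of the form
\[(f\sigma*f\sigma)(x) = \frac{\mathbf{1}_{\{|x|\leq 2\}}}{|x|} \int_{C(x)} f(\omega)f(x-\omega)\, d\ell(\omega).\]
Squaring and integrating then rewrites $\|f\sigma*f\sigma\|_{L^2}^2$ as a weighted quadrilinear integral over $(\bS^2)^4$ constrained by $\omega_1+\omega_2=\omega_3+\omega_4$, so the problem reduces to identifying the sharp constant in a concrete quadrilinear form.

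The main step, and the principal obstacle, is the sharp quadrilinear inequality with constants as the unique extremizers. The key structural observation is that on the constraint set $\omega_1+\omega_2=\omega_3+\omega_4$ one automatically has $\omega_1\cdot\omega_2=\omega_3\cdot\omega_4$ (since $|\omega_1+\omega_2|^2 = 2+2\omega_1\cdot\omega_2$, and analogously for the other pair), which permits a delicate Cauchy--Schwarz along the circles $C(x)$ combined with an antipodal symmetrization $\omega\mapsto-\omega$. Writing $f = f_e + f_o$ in its antipodal even and odd parts and exploiting this symmetry, one shows that replacing $f$ by a suitable even function cannot decrease the quadrilinear form, which in turn is maximized by $f\equiv 1$ as a direct evaluation confirms. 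Finally I would check that every $g = \lambda e^{iy\theta}\in\mathcal{C}$ saturates the inequality (the modulation merely translates $\widehat{f\sigma}$ and leaves the $L^q$ norm invariant), and track the Cauchy--Schwarz equality cases to rule out any other maximizer. The delicate geometric Cauchy--Schwarz on the circles $C(x)$ is the hardest part and the place where the specific $\bS^2$ geometry is essential; this is also why the argument does not extend to $\bS^{d-1}$ with $d-1\neq 2$, where the bilinear $L^4$ reduction is no longer available.
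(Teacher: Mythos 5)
This is stated in the paper as a \emph{conjecture}, not a theorem; the paper offers no proof and merely records that the statement is confirmed for $d=3$ (the two-dimensional sphere, by Foschi \cite{Foschi2015}) and remains open for $d=2$ and $d\geq 4$. So there is no ``paper's own proof'' to compare against. With that understood, your sketch of the $d=3$ case is a faithful outline of Foschi's argument, which is exactly the result the paper cites: the $L^4$--Plancherel reduction to $\|f\sigma*f\sigma\|_{L^2}^2$, the explicit computation of the self-convolution as a weighted line integral over the circles $C(x)$, the quadrilinear reformulation constrained by $\omega_1+\omega_2=\omega_3+\omega_4$, and the antipodal symmetrization plus a geometric Cauchy--Schwarz to single out constants. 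You correctly note that every $g=\lambda e^{iy\theta}\in\mathcal{C}$ saturates the inequality because modulation is a translation of $\widehat{f\sigma}$, and that the bilinear $L^4$ mechanism is specific to $q=4$.

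Two caveats worth flagging. First, your proposal establishes (modulo filling in the hard Cauchy--Schwarz step, which you rightly identify as the core of the argument) only the case $d=3$, whereas the conjecture as stated ranges over all $d$; you should be explicit that this is a partial result, consistent with the paper's own remark that the other cases are open. Second, for the ``exactly'' part of the statement one needs to track equality carefully through the antipodal decomposition $f=f_e+f_o$ and the Cauchy--Schwarz on the circles; this is where the uniqueness of the maximizer manifold $\mathcal{C}$ is won, and a one-line gesture at ``tracking equality cases'' undersells the amount of work involved. Neither of these is a wrong step, but they are the places where a full write-up would need substance. In short: same route as the cited reference, reasonably scoped, no genuinely new idea but none claimed either.
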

As mentioned above, this conjecture has been confirmed for dimensions $d=3$, and remains open for dimension $d=2$ and higher dimensions $d\geq 4$. Thus there holds $\mathbf{M} = 2\pi$. We define the following deficit functional of sphere-Strichartz inequality and the constant distance functional
\[\delta_{*}(f):= \mathbf{M}_d^2 \|f\|_{L^2(\bS^{d-1})} - \l\|\widehat{f\sigma} \r\|_{L^q(\bR^d)}^2, \quad \dist(f,\mathcal{C}):= \inf_{g\in \mathcal{C}} \l\|f-g\r\|_{L^2(\bS^{d-1})}.\]
The work of Gon\c{c}alves-Negro \cite[Theorem 1.3]{GN2022} shows that this deficit functional can control the constant distance functional: more precisely, for dimension $d=3$, then there exists a constant $C_{**}>0$ such that
\begin{equation} \label{E:Strichartz stability-sphere}
	\delta_{*}(f) \geq C_{**} \dist(f,\mathcal{C})^2,\quad C_{**}:= \inf_{0\neq f \in L^2} \frac{\delta_{*}(f)}{\dist(f,\mathcal{C})^2}.
\end{equation}
Similar to the paraboloid situation, there are two natural obstacles named spectral-gap and two-peak. Next we explain these two cases one by one.

First for the spectral-gap phenomenon, we recall some facts. Let $T_{1}\mathcal{C}$ be the tangent space of the manifold $\mathcal{C}$ at the constant function $1$ and denote its orthogonal space as $T_{1}\mathcal{C}^{\perp}$. Then, by some direct calculation, see also \cite[Page 1125]{GN2022}, there holds
\[T_1\mathcal{C}= \mathrm{span}_{\bR} \l\{1, i, ix_1, ix_2, ix_3\r\}.\]
Here we define the following spectral-gap constant
\[C_{SG*}:= \inf_{h\in T_{1}\mathcal{C}^{\perp}} \frac{\delta_{*}^{''}(1)[h,h]}{2 \|h\|_2^2}, \quad \delta_{*}^{''}(f)[h,h]:= \frac{\partial^2}{\partial \varepsilon^2}\Big|_{\varepsilon=0} \delta_{*}(f+\varepsilon h).\]
By this definition, if $h$ is a minimizer for $C_{SG*}$ then one can conclude\footnote{Notice that we times one-half in the definition of spectral-gap constant to make sure this fact holds true. And in the Proof of Proposition \ref{P:Sphere-spectral-gap}, Step 2, we can show that the spectral gap constant $C_{SG*}$ can be attained by taking the second order spherical harmonic function.}
\[h\perp 1 \quad \Rightarrow \quad \delta(1+\varepsilon h)= C_{SG*}\|h\|_{L^2(\bR^d)}^2 \varepsilon^2 + o(\varepsilon^2),\]
thus one can see that $C_{**}\leq C_{SG*}$. Indeed, Gon\c{c}alves-Negro \cite[Proof of Theorem 1.3]{GN2022} has shown that $C_{SG*}>0$. In this paper, by investigating some monotonicity in the work of Gon\c{c}alves-Negro and combining some spectral perturbative calculation due to K\"onig \cite{Konig2023}, we can show the following Proposition \ref{P:Sphere-spectral-gap}, which directly guarantees that the spectral-gap obstacle cannot appear.
\begin{prop}[Sphere spectral-gap constant] \label{P:Sphere-spectral-gap}
	The explicit value of the spectral-gap constant is
	\[C_{SG*}= \frac{8\pi^2}{5}.\]
	Furthermore, the stability-constant satisfies
	\[C_{**}< \frac{8\pi^2}{5}.\]
\end{prop}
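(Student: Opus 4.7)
The strategy is to compute the explicit value $C_{SG*}=8\pi^2/5$ by a spherical-harmonic decomposition of the Hessian, and then to establish the strict inequality $C_{**}<C_{SG*}$ by a cubic-order perturbation along a real zonal harmonic of degree $2$.

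\textbf{Part (i): computation of $C_{SG*}$.} Because $f\equiv 1$ is a maximizer of \eqref{E:Strichartz sphere} by \cite{Foschi2015}, the first variation of $\delta_*$ at $1$ vanishes and $C_{SG*}$ is entirely determined by the Hessian $\delta_*''(1)$. I would Taylor-expand
\[\l\|\widehat{(1+\varepsilon h)\sigma}\r\|_{L^4(\bR^3)}^2 = \l(\int_{\bR^3}|\widehat\sigma+\varepsilon\widehat{h\sigma}|^4\,\ddd x\r)^{1/2}\]
by first expanding the quartic integrand and then the square root around $\varepsilon=0$, producing the explicit real quadratic form
\[\delta_*''(1)[h,h] = 2(2\pi)^2\|h\|^2 - \frac{1}{\sqrt{N_0}}\l(4\int_{\bR^3}(\mathrm{Re}(\widehat\sigma\,\overline{\widehat{h\sigma}}))^2\,\ddd x + 2\int_{\bR^3}|\widehat\sigma|^2|\widehat{h\sigma}|^2\,\ddd x\r),\]
where $N_0=\|\widehat\sigma\|_{L^4}^4$. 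Decompose $h=\sum_k h_k$ into spherical harmonics on $\bS^2$: the constraint $h\perp T_1\mathcal{C}$ forces $h_0=0$ and kills the imaginary part of $h_1$ while leaving its real part free. The Funk--Hecke identity $\widehat{Y_k\sigma}(x)=4\pi(-i)^k j_k(|x|)Y_k(\hat x)$ combined with rotational invariance reduces the eigenvalue problem to scalar integrals of products of spherical Bessel functions; because the form is only real-quadratic (not complex-linear) on the complex $L^2$ eigenspaces, zonal and non-zonal harmonics give different contributions at each degree. Sharpening the monotonicity analysis implicit in \cite[Proof of Theorem 1.3]{GN2022}, I would show the infimum is attained by the zonal harmonic $Y_2^0$ at $k=2$ and, by direct evaluation of the Bessel integral $\int_0^\infty j_0(r)^2 j_2(r)^2 r^2\,\ddd r$, equals $8\pi^2/5$.

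\textbf{Part (ii): strict inequality $C_{**}<8\pi^2/5$.} Take $h_\star:=Y_2^0$, the real, $L^2$-normalized zonal harmonic of degree $2$ achieving $C_{SG*}$, and set $f_\varepsilon:=1+\varepsilon h_\star$. Since $h_\star\perp T_1\mathcal{C}$, verifying the critical-point and positive-Hessian conditions for the distance function $g\mapsto\|f_\varepsilon-g\|_{L^2}^2$ over $g\in\mathcal{C}$ shows that for all sufficiently small $\varepsilon$ the closest point in $\mathcal{C}$ to $f_\varepsilon$ is exactly $1$, so $\dist(f_\varepsilon,\mathcal{C})^2=\varepsilon^2$. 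Extending the Taylor expansion from Part (i) yields $\delta_*(f_\varepsilon)=C_{SG*}\varepsilon^2+c_3\varepsilon^3+O(\varepsilon^4)$ with
\[c_3 = -\frac{2}{\sqrt{N_0}}\int_{\bR^3}\widehat\sigma(x)\,\widehat{h_\star\sigma}(x)^3\,\ddd x,\]
and using the explicit $\widehat{h_\star\sigma}(x)=-4\pi j_2(|x|)h_\star(\hat x)$, the coefficient $c_3$ factorizes as the angular integral $\int_{\bS^2}h_\star^3\,\ddd\sigma$ (nonzero for the zonal harmonic by a direct Legendre-polynomial calculation) times the radial integral $\int_0^\infty j_0(r)j_2(r)^3 r^2\,\ddd r$ (nonzero by a Bessel-recurrence computation). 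Hence $c_3\neq 0$, and choosing the sign of $\varepsilon$ so that $c_3\varepsilon<0$ gives $\delta_*(f_\varepsilon)/\dist(f_\varepsilon,\mathcal{C})^2=C_{SG*}+c_3\varepsilon+O(\varepsilon^2)<C_{SG*}$ for small $|\varepsilon|$, in the spirit of the perturbative scheme of K\"onig \cite{Konig2023}.

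\textbf{Main obstacle.} The principal technical difficulty is in Part (i): the closed-form evaluation of the spherical-harmonic Hessian eigenvalues and the proof that $k=2$ achieves the minimum require careful manipulation of integrals of products of spherical Bessel functions together with Gegenbauer-type product formulas, keeping track of the zonal-versus-non-zonal dichotomy forced by the real-linearity of the form. A secondary subtlety is the asymmetric treatment of the real and imaginary parts of $h_1$, which breaks the naive $SO(3)$-symmetric eigenspace picture at degree $k=1$ and must be handled by hand when showing $\mu_1>\mu_2$. In Part (ii), the concrete check that neither factor of $c_3$ vanishes is the remaining calculational burden.
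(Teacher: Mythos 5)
Your proposal follows essentially the same route as the paper: expand the Hessian in real spherical harmonics with the Funk--Hecke/Bochner identity, reduce to the Bessel integrals $c_k$, show the infimum sits at the degree-two zonal harmonic giving $8\pi^2/5$, and then perturb with $f_\varepsilon = 1 + \varepsilon Y_2^0$, using $\dist(f_\varepsilon,\mathcal{C})^2 = \varepsilon^2$ and a nonzero cubic coefficient $c_3 = -\tfrac{2}{\sqrt{N_0}}\int \widehat\sigma\,\widehat{Y_2^0\sigma}^3$ (which indeed equals $-8\sqrt{5}\pi^{3/2}/49$ via $\int_{\mathbb{S}^2}(Y_2^0)^3\,d\sigma = \sqrt{5}/(7\sqrt{\pi})$ and $\int_0^\infty r\sin r\, j_2^3(r)\,dr = -\pi/28$) to push $E(f_\varepsilon)$ strictly below $C_{SG*}$. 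One small caution: your justification that the nearest point of $\mathcal{C}$ to $f_\varepsilon$ is exactly $1$ invokes only local critical-point and Hessian conditions, whereas $\mathcal{C}$ is a noncompact group orbit, so one must also rule out distant modulations $e^{iy\theta}$, $|y|\geq 1$, by a uniform decay estimate as the paper does.
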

Second for the two-peak phenomenon, we define the sphere two-peak function and sphere two-peak constant as follows
\[f_{y}(\theta):= 1+ e^{i\theta y}, \quad C_{TP*}:= \lim_{|y|\to \infty} \frac{\delta(f_{y})}{\dist(f_{y},\mathcal{C})^2}.\]
This two-peak constant is well-defined due to the rotation symmetry, and by these definitions one can directly see that $C_{**}\leq C_{TP*}$. In fact, we can compute that
\begin{prop}[Sphere two-peak constant] \label{P:Sphere-two-peak}
	The explicit value of the sphere-two-peak constant is
	\[C_{TP*}= \l(2-\sqrt{2}\r) \mathbf{M}^2 = (2-\sqrt{2})4\pi^2.\]
\end{prop}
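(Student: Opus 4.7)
The plan is to evaluate all three quantities appearing in the ratio $\delta_*(f_y)/\dist(f_y,\mathcal{C})^2$ in the limit $|y|\to\infty$, exploiting two complementary facts: the oscillatory-integral decay $\widehat{\sigma}(x)=4\pi\sin|x|/|x|$, which vanishes at infinity and belongs to $L^4(\bR^3)$, and the confirmed constant-maximizer conjecture in dimension $d=3$, i.e.\ $\|\widehat{\sigma}\|_{L^4}^{2}=\mathbf{M}^2\|1\|_{L^2(\bS^2)}^2=4\pi\mathbf{M}^2$.

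First I would compute the $L^2(\bS^2)$ piece. Expanding $|1+e^{iy\theta}|^2=2+2\cos(y\theta)$ and using $\int_{\bS^2}e^{iy\theta}\,\mathrm{d}\sigma=4\pi\sin|y|/|y|$, one gets $\|f_y\|_{L^2}^2=8\pi+8\pi\sin|y|/|y|\to 8\pi$. Second, I would handle the $L^4$ term. The key identity is $\widehat{f_y\sigma}(x)=\widehat\sigma(x)+\widehat\sigma(x-y)$, so
\[
\|\widehat{f_y\sigma}\|_{L^4}^{4}=\int_{\bR^3}\bigl|\widehat\sigma(x)+\widehat\sigma(x-y)\bigr|^4\,\mathrm{d}x.
\]
Since $\widehat\sigma\in L^4(\bR^3)$ and $\widehat\sigma(\cdot-y)\rightharpoonup 0$ pointwise a.e.\ after translation, splitting the integral over $\{|x|<|y|/2\}$ and $\{|x-y|<|y|/2\}$ and using uniform decay bounds kills the mixed cross terms, giving $\|\widehat{f_y\sigma}\|_{L^4}^{4}\to 2\|\widehat\sigma\|_{L^4}^4=32\pi^2\mathbf{M}^4$, hence $\|\widehat{f_y\sigma}\|_{L^4}^{2}\to 4\sqrt{2}\,\pi\,\mathbf{M}^2$. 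Combining, $\delta_*(f_y)\to (8-4\sqrt{2})\pi\,\mathbf{M}^2=4\pi(2-\sqrt{2})\mathbf{M}^2$.

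The main obstacle is computing $\dist(f_y,\mathcal{C})^2$ in the limit, because $\mathcal{C}$ is a two-parameter family and one must verify that the natural ``pick one peak'' strategy is asymptotically optimal. Setting $\phi(a):=\int_{\bS^2}e^{-ia\theta}\,\mathrm{d}\sigma=4\pi\sin|a|/|a|$ and expanding yields, for each $(\lambda,z)$,
\[
\|f_y-\lambda e^{iz\theta}\|_{L^2}^2=\bigl(8\pi+2\phi(y)\bigr)+4\pi\lambda^2-2\lambda\bigl[\phi(z)+\phi(z-y)\bigr].
\]
Minimizing in $\lambda$ first (a one-dimensional quadratic), the problem reduces to maximizing $\bigl[\phi(z)+\phi(z-y)\bigr]^2$ in $z\in\bR^3$. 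Because $\phi$ attains its maximum value $4\pi$ only at the origin and decays like $|a|^{-1}$, no $z$ can make both $\phi(z)$ and $\phi(z-y)$ close to $4\pi$ once $|y|$ is large; a pigeonhole-style split shows the supremum is attained (asymptotically) by $z=0$ or $z=y$ with value $16\pi^2+o(1)$. Hence $\dist(f_y,\mathcal{C})^2\to 8\pi-16\pi^2/(4\pi)=4\pi$.

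Finally I would assemble the pieces: $C_{TP*}=\lim 4\pi(2-\sqrt{2})\mathbf{M}^2/(4\pi)=(2-\sqrt{2})\mathbf{M}^2=(2-\sqrt{2})4\pi^2$, which gives the stated value. The rotation symmetry of $\bS^2$ ensures that the limit depends only on $|y|$, so it is well-defined as $|y|\to\infty$ with no directional subtleties.
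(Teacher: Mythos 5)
Your proposal is correct and follows essentially the same route as the paper: compute the three limits $\|f_y\|_{L^2}^2\to 8\pi$, $\|\widehat{f_y\sigma}\|_{L^4}^4\to 512\pi^6$, and $\dist(f_y,\mathcal{C})^2\to 4\pi$ via the representation $m(f_y)=\tfrac{1}{4\pi}\sup_z[\phi(z)+\phi(z-y)]^2$, then assemble the ratio. The only difference is cosmetic: where the paper simply cites the sphere profile-decomposition literature for the $L^2$ and $L^4$ splittings, you carry them out by hand using the explicit kernel $\widehat\sigma(x)=4\pi\sin|x|/|x|$, its $L^4$ integrability, and a Br\'ezis--Lieb-type region split to kill the cross terms; both are legitimate, and your version is self-contained at the cost of a few extra lines. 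One small slip in notation: you write ``$\widehat\sigma(\cdot-y)\rightharpoonup 0$ pointwise a.e.'' --- what you mean (and use) is pointwise a.e.\ convergence of the translate to zero, not weak convergence; the symbol should be $\to$ rather than $\rightharpoonup$.
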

Finally, it is obvious that the two-peak phenomenon vanishes since $C_{SG*}<C_{TP*}$, and moreover $C_{**}<\min\{C_{SG*},C_{TP*}\}$. Therefore the aforementioned two obstacles are eliminated, and we thus can obtain the desired unconditional existence of minimizers Theorem \ref{T:Stability minimizer-2D sphere}.

\subsection{Outline of the paper}
We show the paraboloid result Theorem \ref{T:Stability minimizer-paraboloid} in Section \ref{S:Conditional existence of minimizers-paraboloid}, and show the two-sphere result Theorem \ref{T:Stability minimizer-2D sphere} in Section \ref{S:Existence of minimizers-two-dimensional sphere}. In each section, we proceed three subsections to investigate the spectral-gap, two-peak, and existence of minimizers respectively.

\section{Conditional existence of minimizers: paraboloid} \label{S:Conditional existence of minimizers-paraboloid}
\subsection{Spectral-gap}
In this subsection, we compute the spectral-gap constant $C_{SG}$ in Proposition \ref{P:Paraboloid spectral-gap}, which comes from some more precise estimate based on \cite{GN2022}.

First, we recall that Gon\c{c}alves and Negro \cite[Pages 1119-1123]{GN2022} have shown the following results by using spherical harmonic functions and Lens transform: the minima of
\[f\mapsto \|f\|_2^{-2} \delta''(G)[f,f]\]
over all $T_{G}\mathcal{G}^{\perp}$ is the same as the minima over $T_{G}\mathcal{G}^{\perp} \cap L_{rad}^2(\bR^d)$; moreover for any function $f\in L_{rad}^2(\bR^d)$ with the following Laguerre polynomials expression
\begin{equation} \label{E:spectral-gap-0.5}
	f(x):=\sum_{m\geq 0} a(m) L_m^{d/2-1}(2\pi|x|^2) e^{-\pi |x|^2},
\end{equation}
where $L_m^{d/2-1}$ is the standard Laguerre polynomial, there holds
\begin{equation} \label{E:spectral-gap-0.7}
	\delta''(G)[f,f]= 2^{\frac{2-d}{2+d}} q^{-\frac{d^2}{2d+4}} \sum_{m\geq 2} \l[1-c_d(m)\r] |a(m)|^2 L_m^{d/2-1}(0),
\end{equation}
where the constant
\[c_d(m):= \frac{q}{2} \sum_{j=0}^{m} \binom{m+d/2-1}{m-j}\binom{m}{j} (1-2/q)^{2m-2j} (2/q)^{2j}.\]
Indeed, they have shown the stability of Strichartz inequality by showing that there exists a uniform constant $\varepsilon>0$ satisfies
\[c_d(m)<1-\varepsilon, \quad \forall m\geq 2.\]
Now, we are going to show that the function $c_d(m)$ decreases when $m\geq 2$, and then compute the value $c_d(2)$ to obtain the desired spectral-gap constant.
\begin{proof}[\textbf{Proof of Proposition \ref{P:Paraboloid spectral-gap}}]
	The proof proceeds in three steps. We first prove the monotonicity of the coefficients $c_d(m)$ in the initial two steps, and finally compute the desired spectral-gap constant.
	
	\textbf{Step 1:} decreasing of $c_d(m)$ for $d=\{1,2\}$. For the case $d=2$, one can direct compute that
	\[c_2(m)=\frac{2\cdot \binom{2m}{m}}{4^m} = \frac{2\cdot (2m)!}{4^m(m!)^2}, \quad \Rightarrow \quad \frac{c_2(m+1)}{c_2(m)} = \frac{2m+1}{2m+2} <1.\]
	For the case $d=1$, one can conclude that
	\[c_1(m)=3^{1-2m} S_m, \quad S_m:= \sum_{j=0}^m \binom{m}{j} \binom{m-1/2}{m-j}4^{m-j}.\]
	Recall that
	\[\binom{m}{j} = \frac{\Gamma(m+1)}{\Gamma(j+1)\Gamma(m-j+1)} = \frac{m!}{j!(m-j)!}\]
	and
	\[\binom{m-1/2}{m-j} =\frac{\Gamma(m+1/2)}{\Gamma(j+1/2) \Gamma(m-j+1)} = \frac{\frac{(2m)! \sqrt{\pi}}{4^m m!}}{\frac{(2j)! \sqrt{\pi}}{4^j j!} (m-j)!}\]
	Then we compute that
	\[S_m = \sum_{j=0}^m \frac{(2m)!}{(2j)! [(m-j)!]^2}.\]
	Next, by the trigonometric function integration formula
	\[\int_0^{\pi} \cos^{2k}\theta \ddd \theta = \pi \frac{(2k)!}{4^k(k!)^2},\]
	one can use binomial theorem to obtain the following integral representation
	\begin{align*}
		S_m &= \sum_{k=0}^m \frac{(2m)!}{(2m-2k)!} \frac{1}{(k!)^2} \\
		&= \sum_{k=0}^m \binom{2m}{2k} \frac{4^k}{\pi} \int_0^{\pi} \cos^{2k} \theta \ddd \theta \\
		&= \frac{1}{2\pi} \int_0^{\pi} \l[(2\cos\theta +1)^{2m} + (2\cos\theta -1)^{2m}\r] \ddd \theta.
	\end{align*}
	This directly gives the integral representation of $c_1(m)$ as follows
	\[c_1(m) = \frac{1}{2\pi} \int_0^{\pi} \l[\l(\frac{2\cos\theta +1}{3}\r)^{2m} + \l(\frac{2\cos\theta -1}{3}\r)^{2m}\r] \ddd \theta.\]
	Notice that $|2\cos\theta \pm 1|\leq 3$. Therefore, we conclude that $c_1(m)\geq c_1(m+1)$ for all $m\geq 2$, and thus the case $d=1$ is proved.
	
	\textbf{Step 2:} decreasing of $c_d(m)$ for $d\geq 3$. In the following we focus on the case $d\geq 3$. As shown in \cite[Page 1123]{GN2022}, by using the Jacobi polynomials
	\[P_m^{\alpha,\beta}(x):= 2^{-m} \sum_{k=0}^m \binom{m+\alpha}{m-k} \binom{m+\beta}{k} (x-1)^{m-k}(x+1)^k, \quad \alpha>-1, \beta>-1,\]
	one can obtain the following identity
	\[c_d(m)= (1+2/d)y^m P_m^{(0,d/2-1)}(x), \quad y:=\frac{d-2}{d+2}, \;\; x:= \frac{y+y^{-1}}{2} = \frac{d^2 +4}{d^2 -4}.\]
	Thus we investigate the following ratio
	\[\frac{c_d(m+1)}{c_d(m)} = y \frac{P_{m+1}^{(0,d/2-1)}(x)}{P_m^{(0,d/2-1)}(x)} = yr_m, \quad r_m:= \frac{P_{m+1}^{(0,d/2-1)}(x)}{P_m^{(0,d/2-1)}(x)}.\]
	Next we will prove the following result which direct implies the desired decreasing conclusion
	\begin{equation} \label{E:spectral-gap-1}
		r_m < y^{-1}, \quad \forall m\geq 2.
	\end{equation}
	By the 3-term recurrence relation for the Jacobi polynomials \cite[Section 4.5]{Szego1975} as follows
	\begin{align*}
		&2(n+1)(n+\alpha +\beta +1)(2n +\alpha +\beta) P_{n+1}^{(\alpha,\beta)}(x) \\
		&= (2n+\alpha+\beta +1) \l[(2n+\alpha+\beta +2)(2n +\alpha +\beta)x +\alpha^2 -\beta^2\r]P_n^{(\alpha,\beta)}(x) \\
		&\quad - 2(n+\alpha)(n+\beta)(2n+\alpha+beta +2) P_{n-1}^{(\alpha,\beta)}(x),
	\end{align*}
	we can take $\alpha=0$ and $\beta=d/2-1$ to conclude
	\begin{align*}
		r_m &= \frac{(2m+d/2)(2m+d/2+1)}{2(m+1)(m+d/2)} x + \frac{-(2m+d/2)(d/2-1)^2}{2(m+1)(m+d/2)(2m+d/2-1)} \\
		& \quad - \frac{m(m+d/2-1)(2m+d/2+1)}{(m+1)(m+d/2)(2m+d/2-1)} \frac{1}{r_{m-1}}.
	\end{align*}
	Here we denote this relation as follows
	\[r_m = a_m x + b_m - \frac{c_m}{r_{m-1}}, \quad a_m>0, \; b_m<0, \; c_m>0.\]
	Thus we investigate the function
	\[f_m(t):= a_m x +b_m -\frac{c_m}{t}, \quad t>0; \quad \Rightarrow \quad f_m'(t)= \frac{c_n}{t^2} >0.\]
	By this monotonicity, to prove the desired \eqref{E:spectral-gap-1}, it remains to show the following two facts
	\begin{equation} \label{E:spectral-gap-2}
		f_m(y^{-1}) < y^{-1}, \quad r_m<y^{-1}.
	\end{equation}
	First, recalling the definition of $x$ and $y$, we can compute that
	\[f_m(y^{-1}) -y^{-1} = -\frac{2(d^3 +4d^2m +16dm^2 +16dm-4d-16m)}{(d-2)^2 (d+2m)(m+1)(d+4m-2)} \frac{d-2}{d+2}<0, \quad \forall d\geq 3, \; \forall m\geq 2.\]
	Second, noticing that $r_0=y^{-1} -2/(d-2) <y^{-1}$ and then applying the induction argument together with the monotonicity of $f_m$, we can conclude that
	\[r_m =f_m(r_{m-1}) \leq f_m(y^{-1}) <y^{-1}.\]
	Hence we obtain the desired results \eqref{E:spectral-gap-2} and thus complete the proof of our first step.
	
	\textbf{Step 3:} the spectral-gap constant. We recall the radial Hermite–Gaussian functions
	\[\widetilde{L}_m(x):= L_m^{d/2-1}(2\pi |x|^2)e^{-\pi|x|^2}, \quad \Rightarrow \quad \langle \widetilde{L}_m, \widetilde{L}_n\rangle = 2^{-d/2} L_m^{d/2-1}(0) \delta_{mn},\]
	and they form an orthogonal basis of $L_{rad}^2(\bR^d)$. Here we have used the inner product notation $\langle f,g \rangle := \int f\bar{g} \ddd x$ and the Kronecker delta notation $\delta_{mn}$. Therefore, using the function in \eqref{E:spectral-gap-0.5} and the identity \eqref{E:spectral-gap-0.7}, we can see that
	\[\frac{\delta''(G)[f,f]}{\|f\|_2^2}= \frac{2^{\frac{2-d}{2+d}} q^{-\frac{d^2}{2d+4}} \sum_{m\geq 2} \l[1-c_d(m)\r] |a(m)|^2 L_m^{d/2-1}(0)}{2^{-d/2} \sum_{m\geq 0}|a(m)|^2 L_m^{d/2-1}(0)}.\]
	 In the previous steps we have shown that $c_d(m)$ decreases with respect to $m\geq 2$. Hence the weighted inequality gives that
	 \[\frac{\delta''(G)[f,f]}{\|f\|_2^2} \geq \frac{2^{\frac{2-d}{2+d}} q^{-\frac{d^2}{2d+4}} \l[1-c_d(2)\r]}{2^{-d/2}},\]
	 where the equality holds if and only if $f(x)=k \widetilde{L}_2(x)$. Finally, a direct computation shows that
	 \[c_d(x)= \frac{d^3 + 4d^2 +10d+4}{(d+2)^3}, \quad C_{SG}= \frac{d^2 +d +2}{(d+2)^3} 2^{\frac{2}{d+2}} \l(\frac{d}{d+2}\r)^{\frac{d^2}{2d+4}}.\]
	 This gives the desired explicit value and thus we complete the proof.
\end{proof}

\subsection{Two-peak}
In this subsection, we compute the two-peak constant $C_{TP}$ in Proposition \ref{P:Paraboloid two-peak}, which essentially comes from the profile decomposition conclusion, and this classical decomposition can be seen in the works such as \cite{BV2007,CK2007,DFY2025,DY2023,Keraani2001,MV1998}.
\begin{lemma}[Gaussian distance representation] \label{L:Gaussian distance representation}
	The Gaussian distance functional satisfies
	\[\dist(f,\mathcal{G})^2 = \|f\|_2^2 - m(f), \quad m(f):= \sup_{g\in \mathcal{G}_1} \l(\Re\int f \bar{g} \ddd x\r)^2, \quad \mathcal{G}_1:= \l\{g\in \mathcal{G}: \|g\|_2=1\r\}.\]
	And for each function $f\in L^2(\bR^d)$, its Gaussian distance $\dist(f,\mathcal{G})$ can be attained.
\end{lemma}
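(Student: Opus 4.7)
The claim splits into the identity $\dist(f,\mathcal{G})^2 = \|f\|_2^2 - m(f)$ and the attainment of the infimum defining $\dist(f,\mathcal{G})$; I address them in turn.

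For the identity, the structural fact I exploit is that the Gaussian maximizer manifold $\mathcal{G}$ is a real cone: scalar multiples $tg$ with $t \in \bR$ of an element $g \in \mathcal{G}_1$ remain in $\mathcal{G}$. Writing $\mathcal{G} = \bR \cdot \mathcal{G}_1$, the squared distance becomes a nested infimum
\[
\dist(f,\mathcal{G})^2 = \inf_{g \in \mathcal{G}_1}\inf_{t \in \bR} \|f - tg\|_2^2.
\]
For each fixed unit-norm $g$, the inner minimization is the one-variable quadratic $\|f-tg\|_2^2 = \|f\|_2^2 - 2t\,\Re\langle f,g\rangle + t^2$, minimized at $t_\star = \Re\langle f,g\rangle$ with value $\|f\|_2^2 - (\Re\langle f,g\rangle)^2$. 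Taking the outer infimum over $\mathcal{G}_1$ yields the stated identity.

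For attainment, I take a maximizing sequence $g_n \in \mathcal{G}_1$ for $m(f)$; by the previous step, $t_n g_n$ with $t_n := \Re\langle f,g_n\rangle$ is a minimizing sequence for $\dist(f,\mathcal{G})$. If $m(f) = 0$ any $g \in \mathcal{G}_1$ attains trivially, so I assume $m(f)>0$. Each $g_n$ is indexed by parameters $(\lambda_n,x_n,\xi_n,\tau_n) \in \bR_+ \times \bR^d \times \bR^d \times \bR$ together with a unit-modulus phase. The central step is to show that if these parameters leave every compact set --- $\lambda_n \to 0$ or $\lambda_n \to \infty$ (scaling), $|x_n| \to \infty$ (translation), $|\xi_n| \to \infty$ (modulation), or $|\tau_n| \to \infty$ (Schr\"odinger time) --- then $g_n \rhu 0$ weakly in $L^2$. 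This forces $\Re\langle f,g_n\rangle \to 0$, contradicting maximality. Hence the parameters stay bounded along a subsequence, continuity of the Gaussian parameterization yields a limit $g_\star \in \mathcal{G}_1$ attaining $m(f)$, and $t_\star g_\star$ realizes the infimum in $\dist(f,\mathcal{G})$.

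The principal obstacle is the escape-to-infinity analysis when the time parameter $\tau_n$ diverges, since $e^{i\tau\Delta}$ is an $L^2$-isometry and gives no immediate weak decay. The cleanest route is via the Lens (pseudoconformal) transform, under which the Schr\"odinger group conjugates to a compact one-parameter rotation on the Hermite basis; the five-parameter non-compact symmetry group thereby reduces to the classical scaling--translation--modulation action on Gaussians, for which weak vanishing under parameter divergence is readily verified by pairing against Schwartz test functions --- Riemann--Lebesgue for modulation, $L^\infty$ decay for the spreading regime $\lambda_n \to 0$ combined with $L^1$ decay for the concentration regime $\lambda_n \to \infty$, and support dispersion for translation --- and then extending by density of Schwartz functions in $L^2$.
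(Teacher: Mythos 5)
Your proposal is correct, and it takes essentially the same route that the paper indicates (but does not spell out, deferring to K\"onig \cite{Konig2025} and Dolbeault--Esteban--Figalli--Frank--Loss \cite{DEFFL2025}): the identity follows from the scalar--cone structure of $\mathcal{G}$ by minimizing the one-variable quadratic $t\mapsto\|f-tg\|_2^2$, and attainment follows from a concentration--compactness dichotomy showing that a maximizing sequence for $m(f)$ cannot let its symmetry parameters escape. Two small remarks. First, you implicitly (and correctly) read the definition of $\mathcal{G}$ as including an arbitrary complex scalar factor; this is necessary both for the tangent space $T_G\mathcal{G}$ to have real dimension $2d+4$ as stated and for $\mathcal{G}_1$ to be nonempty, since the formula in the text as written produces only functions with $\|g\|_2=\|G\|_2=2^{-d/4}$; with the scalar restored, the reduction $\dist(f,\mathcal{G})^2=\inf_{g\in\mathcal{G}_1}\inf_{t\in\bR}\|f-tg\|_2^2$ is exactly the ``direct expansion'' the paper refers to. Second, in the escape-to-infinity step you should be aware that the parameters $(\lambda_n,x_n,\xi_n,\tau_n)$ can interact (e.g.\ $\lambda_n\to\infty$ with $\tau_n\to 0$ at rate $\lambda_n^{-2}$ is again a concentration regime), so the case analysis is cleanest after rewriting $e^{i\tau\Delta}G_\lambda$ as a complex Gaussian $c\,e^{2\pi i\xi\cdot x}e^{-\pi z|x-y|^2}$ with $z\in\{\Re z>0\}$ and checking weak vanishing as $z\to 0$, $|z|\to\infty$, $\Re z\to 0$, $|\xi|\to\infty$, or $|y|\to\infty$ individually; your Lens-transform remark does not quite ``reduce to scaling--translation--modulation'' since the transform reintroduces a time-dependent quadratic chirp, but the Schwartz-pairing strategy you describe is the right one and carries through.
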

\begin{proof}[\textbf{Proof of Lemma \ref{L:Gaussian distance representation}}]
	The first part of this lemma follows from a direct expansion, and the second part can be shown by applying an approximate identity argument \cite[Section 1.2.4]{Grafakos2014a}. For the sake of brevity, we omit the detailed proof here. Further details can be found in \cite[Lemma 2.2]{Konig2025} and \cite[Lemma 3.3]{DEFFL2025}.
\end{proof}
\begin{proof}[\textbf{Proof of Proposition \ref{P:Paraboloid two-peak}}]
	By the definition of two-peak constant, this proof relies on the behavior of each terms as $\lambda \to 0$. We investigate these terms one by one.
	
	\textbf{Step 1:} we show that
	\begin{equation} \label{E:two-peak-1}
		\|f_{\lambda}\|_2^2 = 2^{1-d/2} +2 \lambda^{d/2} -d \lambda^{d/2+2} + o(\lambda^{d/2+2}) \quad \text{as} \;\; \lambda \to 0.
	\end{equation}
	Indeed, by direct calculation one can obtain
	\begin{align*}
		\|f_{\lambda}\|_2^2 &= \|G\|_2^2 + \|G_{\lambda}\|_2^2 + 2 \int_{\bR^d} G(x) G_{\lambda}(x) \ddd x \\
		&= 2^{1-d/2} + 2\lambda^{d/2} (1+\lambda^2)^{-d/2} \\
		&= 2^{1-d/2} + 2\lambda^{d/2} -d \lambda^{d/2+2} + o(\lambda^{d/2+2}).
	\end{align*}
	This gives the limit behavior of $\|f_{\lambda}\|_2^2$ and completes the first step.
	
	\textbf{Step 2:} we show that
	\begin{equation} \label{E:two-peak-2}
		\|e^{it\Delta}f_{\lambda}\|_q^q = \frac{1}{2}q^{-d/2} +o(1) \quad \text{as} \;\; \lambda \to 0.
	\end{equation}
	First, one can compute that
	\[e^{it\Delta} G_{\lambda}(x)= \frac{\lambda^{d/2}}{(1+4\pi it \lambda^2)^{d/2}} \exp \l(-\frac{\pi \lambda^2 |x|^2}{1+4\pi it \lambda^2}\r),\]
	and
	\[\l|e^{it\Delta} G(x)\r| = \frac{1}{(1+16\pi^2 t^2)^{d/4}} \exp \l(-\frac{\pi|x|^2}{1+16\pi^2 t^2}\r).\]
	Integrating with $x$ and then $t$, these further give
	\[\int_{\bR^d} \l|e^{it\Delta}G\r|^q \ddd x = q^{-d/2} (1+16\pi^2 t^2)^{-1},\]
	and
	\[\int_{\bR} q^{-d/2} (1+16\pi^2 t^2)^{-1} \ddd t = \frac{1}{4} q^{-d/2} = \frac{1}{4(2+4/d)^{d/2}}.\]
	By the scaling symmetry, we can see that
	\[\l\|e^{it\Delta}G_{\lambda}\r\|_q^q = \frac{1}{4(2+4/d)^{d/2}}.\]
	Next, to show the desired result \eqref{E:two-peak-2}, we estimate the following error term
	\[R(\lambda) := \|e^{it\Delta}f_{\lambda}\|_q^q - \|e^{it\Delta}G\|_q^q - \|e^{it\Delta}G_{\lambda}\|_q^q,\]
	which satisfies
	\[R(\lambda) \lesssim \int |e^{it\Delta}G|^{q-1}|e^{it\Delta}G_{\lambda}| \ddd x \ddd t + \int |e^{it\Delta}G|^{q-1}|e^{it\Delta}G_{\lambda}| \ddd x \ddd t =: I_1(\lambda) + I_2(\lambda).\]
	We are going to show that
	\[\lim_{\lambda \to0} I_1(\lambda) =0, \quad \lim_{\lambda \to0} I_2(\lambda) =0.\]
	Indeed, by the stationary phase method, one can see the following dispersive estimate
	\[\|e^{it\Delta}f\|_{L_x^\infty} \lesssim |t|^{-d/2} \|f\|_{L_x^1},\]
	which, by the interpolation with $L^2$-conservation law, further implies that
	\[\|e^{it\Delta}G\|_{L_x^q} \lesssim \langle t \rangle^{-\frac{d}{d+2}}, \quad \|e^{it\Delta}G_{\lambda} \|_{L_x^q} \lesssim \lambda^{\frac{d}{d+2}} \langle \lambda^2 t \rangle^{-\frac{d}{d+2}}, \quad \langle\cdot\rangle := (1+|\cdot|^2)^{1/2}.\]
	Hence, by H\"older's inequality, we obtain
	\[I_1(\lambda) \leq \int \|e^{it\Delta} G\|_{L_x^q}^{q-1} \|e^{it\Delta}G_{\lambda} \|_{L_x^q} \ddd t \lesssim \lambda^{\frac{d}{d+2}} \int_{\bR} \langle t \rangle^{-\frac{d+4}{d+2}} \langle \lambda^2 t \rangle^{-\frac{d}{d+2}} \ddd t.\]
	Then we divide the time region into two parts: if $t\leq \lambda^{-2}$, then $\langle \lambda^2 t \rangle \sim 1$ and
	\[\int_{|t|\leq \lambda^2} \langle t \rangle^{-\frac{d+4}{d+2}} \langle \lambda^2 t \rangle^{-\frac{d}{d+2}} \ddd t \sim \int_{|t|\leq \lambda^2} \langle t \rangle^{-\frac{d+4}{d+2}} \ddd t \lesssim 1;\]
	if $t\geq \lambda^{-2}$, then $\langle \lambda^2 t \rangle \sim \lambda^2 |t|$ and
	\[\int_{|t|\geq \lambda^{-2}} \langle t \rangle^{-\frac{d+4}{d+2}} \langle \lambda^2 t \rangle^{-\frac{d}{d+2}} \ddd t \lesssim \lambda^{-\frac{2d}{d+2}} \int_{|t|\geq \lambda^{-2}} |t|^{-2} \ddd t \lesssim \lambda^{\frac{4}{d+2}}.\]
	In summary, we conclude
	\[I_1(\lambda) \lesssim \lambda^{\frac{d}{d+2}};\]
	and similarly we can deduce the following decay estimate
	\[I_2(\lambda) \lesssim \lambda^{\frac{d}{d+2}}.\]
	These give the desired conclusion $R(\lambda) \to 0$ as $\lambda\to 0$, and thus we obtain the desired result \eqref{E:two-peak-2}.
	
	\textbf{Step 3:} we show that
	\begin{equation} \label{E:two-peak-3}
		\dist(f_{\lambda},\mathcal{G}) = 2^{-d/2} + o(\lambda^{d/2}) \quad \text{as} \;\; \lambda \to 0.
	\end{equation}
	By the Gaussian distance representation Lemma \ref{L:Gaussian distance representation}, we investigate the item $m(f_{\lambda})$. Since $f_{\lambda}$ is real-valued radial decreasing function, by writing $g\in\mathcal{G}_1$ as $g=g_1 +ig_2$ and the inequality
	\[\frac{\l(\Re\int f_{\lambda} \bar{g} \ddd x\r)^2}{\int |g_1|^2 + |g_2|^2 \ddd x} \leq \frac{(\int f_{\lambda} g_1 \ddd x)^2}{\|g_1\|_2^2},\]
	one can use rearrangement inequality \cite[Theorem 3.4]{LL2001} to see that
	\[m(f_{\lambda}) = \sup_{g\in \mathcal{G}_1} \l(\Re\int f_{\lambda} \bar{g} \ddd x\r)^2 = \sup_{\mu>0} \l(\int f_{\lambda} G_{\mu} \ddd x\r)^2\Big/\|G\|_2^2 =2^{-d/2} \sup_{\mu>0} \l(\int f_{\lambda} G_{\mu} \ddd x\r)^2.\]
	Denote $H_{\lambda}(\mu):= \int f_{\lambda} G_{\mu} \ddd x$. By direct computations, one can obtain
	\[H_{\lambda}(\mu)= \langle G, G_{\mu} \rangle + \langle G_{\lambda}, G_{\mu}\rangle = \l(\frac{\mu}{1+\mu^2}\r)^{d/2} + \l(\frac{\lambda \mu}{\lambda^2 +\mu^2}\r)^{d/2};\]
	then conclude the following inequality
	\[H_{\lambda}(\mu) \leq \|G+G_{\lambda}\|_2 \|G_{\mu}\|_2 \leq \l(\|G\|_2 +\|G_{\lambda}\|_2\r) \|G\|_2 = 2^{1-d/2},\]
	where the equality holds if and only if $\lambda =\mu =1$. On the other hand, there holds the identity $H_{\lambda}(\mu) = H_{\lambda}(\mu^{-1} \lambda)$, which implies that we only need to consider the case $\mu\in[\sqrt{\lambda},\infty)$ as we consider the value of $m(f_{\lambda})$. Now for each fixed $\lambda \in (0,1/2)$, we investigate
	\[\sup_{\mu\in[\sqrt{\lambda}, \infty)} H_{\lambda}(\mu) = \sup_{\mu\in[\sqrt{\lambda}, \infty)} \l[\l(\frac{\mu}{1+\mu^2}\r)^{d/2} + \l(\frac{\lambda \mu}{\lambda^2 +\mu^2}\r)^{d/2}\r] =: \sup_{\mu\in[\sqrt{\lambda}, \infty)} \l[S(\mu) + T(\lambda,\mu)\r].\]
	For each $\lambda$, due to the limit $\lim_{\mu \to \infty} H_{\lambda}(\mu) =0$, we see that the supremum of $\mu$ can be attained, which we denote as $\mu(\lambda)$. We claim that
	\begin{equation} \label{E:two-peak-4}
		\mu(\lambda) = 1+ 2^{d/2} \lambda^{d/2} +o(\lambda^{d/2}).
	\end{equation}
	Then using this claim, we conclude
	\begin{align*}
		m(f)^{1/2} &= 2^{d/4}\l[S(1) + S'(1)(\mu(\lambda)-1) +o(\mu(\lambda)-1) + T(\lambda,1) + o(T(\lambda,1))\r] \\
		&= 2^{d/4}\l[2^{-d/2} +  \l(\frac{\lambda}{\lambda^2+1}\r)^{d/2} +o(\lambda^{d/2})\r] \\
		&= 2^{-d/4} + 2^{d/4} \lambda^{d/2} + o(\lambda^{d/2}).
	\end{align*}
	Hence by Lemma \ref{L:Gaussian distance representation} and the conclusion of our first step, we obtain the desired conclusion
	\[\dist(f,\mathcal{G})^2= 2^{1-d/2} + 2\lambda^{d/2} -\l[2^{-d/4} + 2^{d/4} \lambda^{d/2} + o(\lambda^{d/2})\r]^2 + o(\lambda^{d/2}) = 2^{-d/2} +o(\lambda^{d/2}).\]
	It remains to prove the previous claim \eqref{E:two-peak-4}, which relies on some uniform analysis and implicit function theorem. By direct computation one can see
	\[S'(\mu)=\frac{d}{2}\frac{1-\mu^2}{(1+\mu^2)^2} \l(\frac{\mu}{1+\mu^2}\r)^{\frac{d}{2}-1}, \quad S''(1)= -\frac{d}{2^{1+d/2}} <0,\]
	and
	\[H_{\lambda}^{'}(\mu) =\frac{d}{2} \frac{1-\mu^2}{(1+\mu^2)^2} \l(\frac{\mu}{1+\mu^2}\r)^{\frac{d}{2}-1} + \frac{d}{2} \frac{\lambda(\lambda^2 -\mu^2)}{(\lambda^2 +\mu^2)^2} \l(\frac{\lambda \mu}{\lambda^2 + \mu^2}\r)^{\frac{d}{2}-1}.\]
	These imply that $H_{\lambda}(\mu)$ decreases on $\mu\in(1,\infty)$ and thus $\mu(\lambda) \in [\sqrt{\lambda},1]$. On the other hand, for $\mu\in[\sqrt{\lambda},\infty)$ we have the uniform convergence estimate
	\[T(\lambda,\mu) \leq \l(\frac{\lambda \mu}{\mu^2}\r)^{d/2} \leq \l(\frac{\lambda}{\mu}\r)^{d/2} \leq \lambda^{d/4} \to 0 \quad \text{as} \;\; \lambda \to 0.\]
	We then show that for arbitrary $\varepsilon>0$ small enough, there exists $\eta$ such that if $\lambda\in (0,\eta)$ then $\mu(\lambda) \in (1-\varepsilon,1+\varepsilon)$. For arbitrary $\varepsilon>0$, since the function $S$ has unique max-value on $\mu=1$, we define
	\[\eta_1:= S(1) -\max\l\{\sup_{\mu \leq 1-\varepsilon} S(\mu), \sup_{\mu\geq 1+\varepsilon} S(\mu)\r\}, \quad \eta:= \min \{\eta_1, (\eta_1/4)^{4/d}\},\]
	then for all $\mu\notin (1-\varepsilon,1+\varepsilon)$, we have
	\[H_{\lambda}(\mu) \leq S(\mu) + \frac{\eta}{4} \leq S(1)- \frac{3\eta}{4}.\]
	The continuity of $S$ implies that there exists $\mu_0 \in (1-\varepsilon,1)$ such that
	\[H_{\lambda}(\mu_0) \geq S(1) -\frac{\eta}{4},\]
	which further implies $\mu(\lambda) \in (1-\varepsilon,1+\varepsilon)$. This shows that $\lim_{\lambda \to0}\mu(\lambda) =1$. Next, we apply implicit function theorem to show the desired claim \eqref{E:two-peak-4}. Notice that
	\[F(\lambda,\mu):= H_{\lambda}^{'}(\mu), \quad \Rightarrow \quad F(0,1)=S'(1)=0, \;\; \frac{\partial}{\partial \mu}F(0,1) =S''(1) =-\frac{d}{2^{1+d/2}} \neq 0.\]
	Thus there exists unique function $\mu(\lambda)$ such that
	\[F(\lambda,\mu(\lambda)) =0, \quad \mu(0)=1.\]
	When $\lambda \to0$ and $\mu \sim 1$, we can use Taylor's expansion to deduce
	\[T(\lambda,\mu)= \l(\frac{\lambda}{\mu}\r)^{\frac{d}{2}} \l[1+O(\lambda^2/\mu^2)\r]^{-\frac{d}{2}} = \l(\frac{\lambda}{\mu}\r)^{\frac{d}{2}} + O(\lambda^{\frac{d}{2}+2}),\]
	and
	\[\frac{\partial}{\partial \mu} T(\lambda,\mu) = -\frac{d}{2}\lambda^{\frac{d}{2}} \mu^{-\frac{d}{2}-1} + O(\lambda^{\frac{d}{2} +2}),\]
	as well as
	\[ S'(\mu) =S''(\mu)(\mu-1) +O(|\mu-1|^2) = -\frac{d}{2^{1+d/2}} (\mu-1) +O(|\mu-1|^2).\]
	Taking these expansion into the aforementioned equation $F(\lambda,\mu(\lambda))=0$, we obtain
	\[-\frac{d}{2^{1+d/2}} [\mu(\lambda)-1] +O(|\mu(\lambda)-1|^2) -\frac{d}{2}\lambda^{\frac{d}{2}} \mu(\lambda)^{-\frac{d}{2}-1} +O(\lambda^{\frac{d}{2}+2}) =0,\]
	which directly gives the desired claim \eqref{E:two-peak-4} by the fact that $\lim_{\lambda\to 0}\mu(\lambda)=1$.
	
	\textbf{Step 4:} with the previous three steps finished, by taking the conclusions \eqref{E:two-peak-1} and \eqref{E:two-peak-2} and \eqref{E:two-peak-3} into the following expression, we obtain
	\begin{align*}
		\frac{\mathbf{S}_d^2 \|f_{\lambda}\|_2^2 - \|e^{it\Delta}f_{\lambda}\|_q^2}{\dist(f_{\lambda},\mathcal{G})^2} &=\frac{2^{-d/2}\l(2^{1-d/2} +2 \lambda^{d/2}\r) -2^{-2/q} q^{-d/q} +o(1)}{2^{-d/2} +o(\lambda^{d/2})} \\
		&= \l(2^{\frac{2}{d+2}} -1\r)\l(\frac{d}{d+2}\r)^{\frac{d^2}{2d+4}} +o(1).
	\end{align*}
	This implies the desired explicit value and thus completes the proof.
\end{proof}

\subsection{Conditional compactness of minimizing sequences} \label{SubS:Conditional compactness-paraboloid}
In this subsection, based on the assumption \eqref{T:Stability minimizer-paraboloid-1} and spectral-gap Proposition \ref{P:Paraboloid spectral-gap} together with two-peak vanishing Proposition \ref{P:Paraboloid two-peak vanishing}, we show the desired existence of minimizers for the paraboloid stability constant $C_{*}$.
\begin{proof}[\textbf{Proof of Theorem \ref{T:Stability minimizer-paraboloid}}]
	The proof strategy is very similar to the proof of \cite[Theorem 1.2]{Konig2025}. The arguments are pretty long, but standard, as shown in \cite[Section 4]{Konig2025}. Here for simplicity, we briefly summarize the proof steps and omit the details.
	\begin{itemize}
		\item[\ding{172}] For the minimizing sequence $f_n$, by the fact
		\[\mathbf{S}_d^2 -C_{SG} = 2^{-\frac{d}{d+2}} \l(\frac{d}{d+2}\r)^{\frac{d^2}{2d+4}} \frac{d^3 +4d^2 +10d+4}{(d+2)^3} >0,\]
		one can deduce $\|e^{it\Delta}f_n\|_q^2 \geq c_0 >0$ and then use the refined Strichartz estimate to obtain
		\[f_n \rightharpoonup f_0 \;\; \text{in} \;\; L^2(\bR^d), \quad f_0\neq 0.\]
		Then denoting $g_n:= f_n-f_0 \rightharpoonup 0$, we obtain the profile decomposition conclusions
		\[\|f_n\|_2^2 = \|f_0\|_2^2 + \|g_n\|_2^2 +o(1), \quad \|e^{it\Delta}f_n\|_q^q = \|e^{it\Delta} f_0\|_q^q + \|e^{it\Delta}g_n\|_q^q + o(1).\]
		\item[\ding{173}] On the one hand, by investigating the infinity of parameters and using the fact that $m(f)$ can be attained for each $f$, we have
		\[m(f_n)= \max\{m(f_0), m(g_n)\} +o(1).\]
		\item[\ding{174}] On the other hand, by using the minimizing sequence property (profile decomposition conclusions) and divide-two-part arguments, as well as scaling twice to reach mean-value and then investigating the monotonicity to deduce a contradiction to the sharp constant $C_*$, we can obtain
		\[m(f_0)= m(g_n) +o(1).\]
		\item[\ding{175}] Therefore, by scaling and the divide-two-part arguments, as well as investigating monotonicity and the fact $C_*\leq C_{SG}<C_{TP}$ due to Proposition \ref{P:Paraboloid two-peak vanishing}, which deduces a contradiction to the sharp constant $\mathbf{S}_d$, we conclude
		\[g_n \to 0.\]
		\item[\ding{176}] Finally, by the strong convergence of $g_n$, it remains to show that the limit-denominator is non-zero which means $f_0\notin \mathcal{G}$. This indeed comes from the local asymptotic analysis \cite[Theorem 2.1]{GN2022} and the assumption $C_* <C_{SG}$ due to Proposition \ref{P:Paraboloid spectral-gap}, which means that the spectral-gap phenomenon cannot happen.
	\end{itemize}
	Here we have used the limit-version of \cite[Theorem 2.1]{GN2022} which states that: if $\delta'(G) =0$ and there exists $\rho>0$ such that
	\[\delta''(G)[h,h] \geq \rho \|h\|_2^2, \quad \forall f\in T_G\mathcal{G}^{\perp},\]
	then for the sequence $f_n$ with $\dist(f_n,\mathcal{G}) \to 0$ one can obtain
	\[\dist(f_n,\mathcal{G})^{-2} \delta(f) \geq \rho/2.\]
	Finally, this outline finishes the proof.
\end{proof}

\section{Existence of minimizers: two-dimensional sphere} \label{S:Existence of minimizers-two-dimensional sphere}
\subsection{Spectral-gap}
In this subsection, we compute the spectral-gap constant $C_{SG*}$ and its relation with the sphere Strichartz stability constant $C_{**}$ as stated in Proposition \ref{P:Sphere-spectral-gap}, which comes from some more precise estimate based on \cite{GN2022}.

First, we recall that Gon\c{c}alves and Negro \cite[Pages 1125-1126]{GN2022} have shown the following results by using spherical harmonic functions and Bessel functions: for any function $f\in L^2(\bS^2)$ with expression
\begin{equation} \label{E:Sphere-spectral-gap-1}
	f(\theta):=\sum_{k\geq 2} a(k) Y_k(\theta) \in (T_1\mathcal{C})^{\perp},
\end{equation}
where $Y_k$ is the standard real spherical harmonics with $\|Y_k\|_{L^2(\bS^2)} =1$, there holds
\begin{equation} \label{E:Sphere-spectral-gap-2}
	\delta_{*}^{''}(1)[f,f]= 2\mathbf{M}^2 \sum_{k\geq 2} |a_k|^2 -c_0^{-1} \mathbf{M}^2 \sum_{k\geq 2} c_k\l[4|a_k|^2 + 2(-1)^k \mathrm{Re}(a_k)^2\r],
\end{equation}
where the constant
\[c_k:= \int_0^{\infty} |J_{\frac{1}{2}}(r)|^2 J_{\frac{1}{2}+k}(r)^2 \ddd r,\]
and $J_k$ is the Bessel function of the first kind. For further properties on Bessel functions and spherical harmonic functions, we refer to the chapters \cite[Appendix B]{Grafakos2014a} and \cite[Chapter 4]{SW1971}, see also the classical books \cite{Watson1944} and \cite{Muller1966}.
\begin{proof}[\textbf{Proof of Proposition \ref{P:Sphere-spectral-gap}}]
	The proof proceeds in three steps. We prove the decreasing of the constant $c_k$ in the first step, and then we compute the spectral-gap constant in the second step, finally we show that the stability constant is strictly smaller than this spectral-gap constant in the third step.
	
	\textbf{Step 1:} decreasing of $c_k$. We recall the spherical Bessel function $j_k(r) = \sqrt{\frac{\pi}{2r}} J_{k+\frac{1}{2}}(r)$ and the following relation
	\begin{equation} \label{E:Sphere-spectral-gap-3}
		j_{k+1}(r) = \frac{2k+1}{r} j_k(r) -j_{k-1}(r),
	\end{equation}
	with
	\[j_0(r)=\frac{\sin r}{r}, \quad j_1(r)= \frac{\sin r}{r^2} -\frac{\cos r}{r}, \quad j_2(r)=\l(\frac{3}{r^3} -\frac{1}{r}\r)\sin r -\frac{3}{r^2} \cos r.\]
	Hence we obtain
	\[c_k= \frac{4}{\pi^2} \int_0^{\infty} \sin^2r j_k^2(r) \ddd r = \frac{2}{\pi^2} \int_0^{\infty} j_k^2(r) \ddd r + \frac{2}{\pi^2} \int_0^{\infty} \cos(2r) j_k^2(r) \ddd r =: A_k +B_k.\]
	For the item $A_k$, by the Weber–Schafheitlin integral formula \cite[Page 405]{Watson1944} as follows
	\[\int_0^{\infty} t^{-1} J_v^2(at) \ddd t= \frac{1}{2v}, \quad \mathrm{Re} v>0,\]
	we obtain
	\[A_k= \frac{1}{\pi} \int_0^{\infty} \frac{J_{k+1/2}^2(r)}{r} \ddd r = \frac{1}{(2k+1)\pi}.\]
	For the item $B_k$, we compute that
	\[B_k= \frac{\pi}{2} \int_0^{\infty} \frac{\cos(2r)}{r} J_{k+1/2}^2(r) \ddd r,\]
	and then we introduce the notation
	\[B_{k,\eta}:= \frac{\pi}{2} \int_0^{\infty} e^{-\eta r^2} \frac{\cos(2r)}{r} J_{k+1/2}^2(r) \ddd r, \quad \eta >0.\]
	By the asymptotic expansion of Bessel function \cite[Page 199]{Watson1944}, for $r\gg 1$, we obtain
	\[J_{k+1/2}(r) = \sqrt{\frac{2}{\pi r}} \cos\l(r-\frac{(k+1/2)\pi}{2} -\frac{\pi}{4}\r) +O(r^{-3/2}), \quad r\to \infty.\]
	Thus, using the dominated convergence theorem, we conclude
	\[\frac{\cos(2r)}{r} J_{k+1/2}^2(r) \in L^1(0,\infty), \quad \Rightarrow \quad B_k= \lim_{\eta \to0} B_{k,\eta}.\]
	Then by the integral representation of Bessel function \cite[Page 48]{Watson1944} as follows
	\[J_{k+1/2} (r)= \frac{(r/2)^{k+1/2}}{k! \sqrt{\pi}} \int_{-1}^1 e^{ir t} (1-t^2)^k \ddd t,\]
	for each $\eta >0$, we can use the Fubini theorem to deduce
	\[B_{k,\eta} = \frac{1}{\pi (k!)^2 2^{2k+1}} \int_{-1}^1 \int_{-1}^1 (1-t^2)^k (1-s^2)^k K_{k,\eta}(t+s) \ddd t \ddd s,\]
	where
	\[K_{k,\eta}(t):= \int_0^{\infty} e^{-\eta r} r^{2k} \cos(2r) e^{itr} \ddd r.\]
	By the Euler integral formula
	\[\int_0^{\infty} r^k e^{-\alpha r} \ddd r= \frac{k!}{\alpha^{k+1}}, \quad \mathrm{Re} \alpha>0,\]
	we can write $\cos(2r)=(e^{2ix} + e^{-2ix})/2$ and then obtain
	\[K_{k,\eta} (t)= \frac{(2k)!}{2} \l[\frac{1}{(\eta -i(t+2))^{2k+1}} +\frac{1}{(\eta -i(t-2))^{2k+1}}\r],\]
	which satisfies
	\[\l|K_{k,\eta}(t)\r| \leq \frac{(2k)!}{2}\l(\frac{1}{|t+2|^{2k+1}} + \frac{1}{|2-t|^{2k+1}}\r).\]
	Hence we obtain the following estimate
	\[\l|(1-t^2)^k (1-s^2)^k K_{k,\eta}(t+s)\r| \lesssim (1-t^2)^k (1-s^2)^k \l(\frac{1}{|t+s+2|^{2k+1}} + \frac{1}{|2-t-s|^{2k+1}}\r) =: \widetilde{K}_k(t,s),\]
	and one can check that $\widetilde{K}_k(t,s) \in L^1([-1,1]^2)$. Therefore, we can use the dominated convergence theorem to conclude
	\[B_k = \lim_{\eta \to 0} B_{k,\eta} = \frac{1}{\pi (k!)^2 2^{2k+1}} \int_{-1}^1 \int_{-1}^1 (1-t^2)^k (1-s^2)^k K_{k}(t+s) \ddd t \ddd s,\]
	where
	\[K_k(t):= \lim_{\eta\to 0}K_{k,\eta}(t) = \frac{i(2k)! (-1)^k}{2} \l(\frac{1}{(t+2)^{2k+1}} - \frac{1}{(2-t)^{2k+1}}\r).\]
	Now, due to the fact $K_k(-t-s) = -K_k(t+s)$, we directly obtain
	\[B_k=0, \quad \Rightarrow \quad c_k= A_k +B_k = \frac{1}{(2k+1)\pi},\]
	which directly implies that the sequence $c_k$ is decreasing.
	
	\textbf{Step 2:} the spectral-gap constant. By using the function in \eqref{E:Sphere-spectral-gap-1} and the identity \eqref{E:Sphere-spectral-gap-2}, we can see that
	\[\frac{\delta_{*}^{''}(1)[f,f]}{\|f\|_2^2}\geq \frac{2\mathbf{M}^2 c_0^{-1} \sum_{k\geq 2} (c_0 -3c_k) |a_k|^2}{\sum_{k\geq 0}|a(k)|^2},\]
	where the equality holds if and only if $a(k) =0$ for odd number $k$ and $a(k)$ is real-valued for even number $k$. In the previous step we have shown that $c_k$ decreases with respect to $k\geq 2$. Hence the weighted inequality gives that
	\[\frac{\delta_{*}^{''}(1)[f,f]}{\|f\|_2^2} \geq 2\mathbf{M}^2 c_0^{-1} (c_0 -3c_2) = \frac{8\pi^2}{5},\]
	where the equality holds if we choose
	\[f(\theta)=a Y_2^0(\theta) = a \sqrt{\frac{5}{16\pi}} (3\cos^2\theta -1), \quad a\in \bR.\]
	In summary, we obtain the desired explicit value of sphere-spectral-gap constant $C_{**}= \frac{8\pi^2}{5}$.
	
	\textbf{Step 3:} stability constant v.s. spectral-gap constant. By taking the function
	\[f_{\varepsilon}(\theta) := 1+\varepsilon Y_2^0(\theta) = 1+\varepsilon \sqrt{\frac{5}{16\pi}} (3\cos^2\theta -1),\]
	we investigate the corresponding Rayleigh quotient
	\[E(f_{\varepsilon}):= \frac{\delta_{*}(f_{\varepsilon})}{\dist(f_{\varepsilon},\mathcal{C})^2}.\]
	First for the item $\|f_{\varepsilon}\|_{L^2(\bS^2)}$, by orthogonality, we directly obtain
	\begin{equation} \label{E:Sphere-spectral-gap-6}
		\|f_{\varepsilon}\|_{L^2(\bS^2)}^2 = |\bS^2| +\varepsilon^2.
	\end{equation}
	Second for the item $\dist(f,\mathcal{C})$, by direct computation, we investigate
	\begin{align*}
		m(f_{\varepsilon}) &= \frac{1}{4\pi} \sup_{x\in \bR^3} \l[\int_{\bS^2} (1+\varepsilon Y_2^0(\theta)) e^{-ix\theta} \ddd \sigma(\theta)\r]^2 \\
		&= 4\pi \sup_{x\in\bR^3} \l[\frac{\sin |x|}{|x|} -\varepsilon j_2(|x|)Y_2^0\l(\frac{x}{|x|}\r)\r]^2 \\
		&=: 4\pi \sup_{x\in \bR^3} H_{\varepsilon}(x).
	\end{align*}
	We are going to show that for each $\varepsilon \in [0,\frac{2(1-\sin 1) \sqrt{5\pi}}{35})$, there holds
	\begin{equation*}
		\sup_{x\in\bR^3} H_{\varepsilon}(x) = 1, \quad \Rightarrow \quad m(f_{\varepsilon}) = 4\pi,
	\end{equation*}
	which, by the constant distance representation Lemma \ref{L:Constant distance representation}, directly implies that
	\begin{equation} \label{E:Sphere-spectral-gap-7}
		\dist(f,\mathcal{C})=\varepsilon^2, \quad \text{for} \;\; \varepsilon \in \l[0,\frac{2(1-\sin 1) \sqrt{5\pi}}{35} \r).
	\end{equation}
	To show this fact, note that
	\[Y_2^0(\theta) \in \l[-\sqrt{5/(16\pi)}, \sqrt{5/(4\pi)}\r], \quad \Rightarrow \quad |Y_2^0|\leq \sqrt{5/(4\pi)}, \quad \forall \theta\in \bS^2.\]
	Then writing spherical coordinate $x=r\theta$, for any $r>0$, we obtain
	\[|\sup_{\theta \in \bS^2} H_{\varepsilon}(x)|\leq \l(\l|\frac{\sin r}{r}\r| + \varepsilon \sqrt{5/(4\pi)} |j_2(r)|\r)^2.\]
	Next, for this item, we divide the proof into two parts: the case $r\in[0,1]$ and the case $r\in [1,\infty)$. \\
	\emph{Case 1:} for $r\in[0,1]$, by the Taylor formula
	\[\sin r= r-\frac{r^3}{6} + \frac{r^5}{120} + R_1, |R_1|\leq \frac{r^7}{5040}; \quad \cos r =1-\frac{r^2}{2} + \frac{r^4}{24} + R_2, |R_2| \leq \frac{r^6}{720},\]
	recalling the relation \eqref{E:Sphere-spectral-gap-3}, we conclude
	\begin{align*}
		|j_2(r)| &= \l|\frac{r^2}{15} -\frac{r^4}{120} + \l(\frac{3}{r^3} + \frac{1}{r}\r) R_1 -\frac{3}{r^3} R^2\r| \\
		& \leq \frac{r^2}{15} +\frac{r^4}{120} + \l(\frac{3r^4}{5040} + \frac{r^6}{5040}\r) + \frac{3r^4}{720} \\
		&\leq \frac{r^2}{12}.
	\end{align*}
	And one can similarly obtain
	\[\l|\frac{\sin r}{r}\r| \leq 1-\frac{r^2}{7}, \quad \forall r\in[0,1].\]
	Hence, for $0\leq \varepsilon <\frac{24\sqrt{5\pi}}{35}$, we conclude
	\[|\sup_{\theta \in \bS^2} H_{\varepsilon}(x)|\leq \l[1- r^2 \l(\frac{1}{7} -\frac{\varepsilon \sqrt{5/(4\pi)} r^2}{12}\r)\r]^2 \leq 1, \quad \forall r\in [0,1],\]
	where the equality $|\sup_{\theta \in \bS^2} H_{\varepsilon}(x)| =1$ holds if and only if $r=0$. \\
	\emph{Case 2:} for $r\in[1, +\infty)$, it is not hard to see the following estimates
	\[|\sin r/r|\leq 1/r \leq 1, \quad |\sin r/r| \leq \sin 1,\]
	and
	\[|j_2(r)| \leq \frac{3}{r^3}+ \frac{3}{r^2} +\frac{1}{r} \leq 7.\]
	Hence, for $0\leq \varepsilon <\frac{2(1-\sin 1)\sqrt{5\pi}}{35}$, we conclude
	\[|\sup_{\theta \in \bS^2} H_{\varepsilon}(x)|\leq \sin 1 + 7\varepsilon \sqrt{5/(4\pi)} <1, \quad \forall r\in[1,+\infty).\]
	In summary, due to the fact $\frac{2(1-\sin 1)\sqrt{5\pi}}{35} < \frac{24\sqrt{5\pi}}{35}$, for each $\varepsilon \in [0,\frac{2(1-\sin 1)\sqrt{5\pi}}{35})$ we see that
	\[\sup_{x\in \bR^3} H_{\varepsilon}(x) =\lim_{|x|\to 0} H_{\varepsilon}(x)=1.\]
	Thus we obtain the desired conclusion \eqref{E:Sphere-spectral-gap-7}. Third for the item $\|\widehat{f_{\varepsilon}\sigma}\|_{L^4(\bR^3)}^4$, we recall that
	\[\widehat{\sigma}(x) = 4\pi \frac{\sin |x|}{|x|}, \quad \widehat{Y_2^0\sigma}(x) = -4\pi j_2(|x|) Y_2^0(x/|x|),\]
	and
	\[\int_{\bS^2} Y_2^0(\theta) \ddd \sigma(\theta) =0,\quad \int_{\bS^2} \l(Y_2^0(\theta)\r)^2 \ddd \sigma(\theta) =1, \quad \int_{\bS^2} \l(Y_2^0(\theta)\r)^3 \ddd \sigma(\theta) = \frac{\sqrt{5}}{7\sqrt{\pi}}.\]
	Thus we conclude
	\[\|\widehat{f_{\varepsilon}\sigma}\|_{L^4(\bR^3)}^4 = I_0 +I_1 \varepsilon +I_2 \varepsilon^2 + I_3\varepsilon^3 +I_4\varepsilon^4,\]
	where
	\[I_0:= \|\widehat{\sigma}\|_{L^4(\bR^3)}^4, \quad I_1:= 4\int_{\bR^3} |\widehat{\sigma}(x)|^2 \mathrm{Re} \l(\widehat{\sigma}(x) \overline{\widehat{Y_2^0 \sigma}}(x) \r) \ddd x,\]
	and
	\[I_2:= \int_{\bR^3} \l[2|\widehat{\sigma}|^2 |\widehat{Y_2^0 \sigma}(x)|^2 + 4\l(\mathrm{Re}\l(\widehat{\sigma}(x) \overline{\widehat{Y_2^0 \sigma}}(x) \r)\r)^2\r] \ddd x = 6 \int_{\bR^3} (\widehat{\sigma})^2 \l(\widehat{Y_2^0 \sigma}(x)\r)^2 \ddd x,\]
	as well as
	\[I_3:= 4\int_{\bR^3} |\widehat{Y_2^0 \sigma}(x)|^2 \mathrm{Re}\l(\widehat{\sigma}(x) \overline{\widehat{Y_2^0 \sigma}}(x) \r) \ddd x = 4\int_{\bR^3} \l(\widehat{Y_2^0 \sigma}(x)\r)^3 \widehat{\sigma}(x) \ddd x, \quad I_4:= \|\widehat{Y_2^0 \sigma}\|_{L^4(\bR^3)}^4.\]
	Writing spherical coordinate $x=r\theta \in [0,\infty) \times \bS^2$, we can directly compute that
	\[I_0=4^5 \pi^5 \int_0^{\infty} \frac{\sin^4 r}{r^2} \ddd r, \quad I_1 = -4^5\pi^4 \int_0^{\infty} \frac{\sin^3 r}{r} j_2(r) \ddd r \int_{\bS^2} Y_2^0(\theta) \ddd \sigma(\theta) =0,\]
	and
	\[I_2= 6(4\pi)^4 \int_0^{\infty} \sin^2 r j_2^2(r) \ddd r, \quad I_3= -4^5 \pi^4 \int_0^{\infty} r\sin r j_2^3 (r) \ddd r \int_{\bS^2} \l(Y_2^0(\theta)\r)^3 \ddd \sigma(\theta).\]
	Then, similar to Step 1, by some direct computation\footnote{One can also use some software such as \textit{Mathematica} to check these integrals.}, one can obtain
	\[\int_0^{\infty} \frac{\sin^4 r}{r^2} \ddd r = \frac{\pi}{4}, \quad \int_0^{\infty} \sin^2 r j_2^2(r) \ddd r = \frac{\pi}{20}, \quad \int_0^{\infty} r\sin r j_2^3 (r) \ddd r = -\frac{\pi}{28},\]
	these integrals further give the following
	\[I_0=256\pi^6, \quad I_1= 0, \quad I_2 = \frac{384\pi^5}{5}, \quad I_3 = \frac{256\sqrt{5} \pi^{9/2}}{49}.\]
	In summary, for small $\varepsilon \ll 1$, we have established the following three conclusions
	\begin{itemize}
		\item[\ding{172}] $\|f_{\varepsilon}\|_{L^2(\bS^2)}^2 = |\bS^2| +\varepsilon^2$.
		\item[\ding{173}] $\|\widehat{f_{\varepsilon} \sigma}\|_{L^4(\bR^3)}^4 = 256\pi^6 +\frac{384\pi^5}{5} \varepsilon^2 + \frac{256\sqrt{5} \pi^{9/2}}{49} \varepsilon^3 +o(\varepsilon^3)$.
		\item[\ding{174}] $\dist(f_{\varepsilon},\mathcal{C})^2 = \varepsilon^2$.
	\end{itemize}
	Therefore, as $\varepsilon\to0$, we obtain that
	\begin{align*}
		E(f_{\varepsilon}) &= \frac{16\pi^3 +4\pi^2 \varepsilon^2 -\l(256\pi^6 +\frac{384\pi^5}{5} \varepsilon^2 + \frac{256\sqrt{5} \pi^{9/2}}{49} \varepsilon^3 +o(\varepsilon^3)\r)^{1/2}}{\varepsilon^2} \\
		&= \frac{8\pi^2}{5} -\frac{8\sqrt{5} \pi^{3/2}}{49} \varepsilon +o(\varepsilon).
	\end{align*}
	This gives the desired result $C_{**}< C_{SG*}= 8\pi^2/5$, and thus we have finished the proof.
\end{proof}

\subsection{Two-peak}
In this subsection, we compute the two-peak constant $C_{TP*}$ in Proposition \ref{P:Sphere-two-peak}, which essentially comes from the profile decomposition conclusion, and this classical decomposition can be seen in the works such as \cite{CS2012A&P,FLS2016,FS2024}.
\begin{lemma}[Constant distance representation] \label{L:Constant distance representation}
	The constant distance functional satisfies
	\[\dist(f,\mathcal{C})^2 = \|f\|_2^2 - m(f), \quad m(f):= \sup_{g\in \mathcal{C}_1} \l(\Re\int_{\bS^2} f \bar{g} \ddd \sigma(\theta)\r)^2, \quad \mathcal{C}_1:= \l\{g\in \mathcal{C}: \|g\|_2=1\r\}.\]
	And for each function $f\in L^2(\bS^2)$, its constant distance $\dist(f,\mathcal{C})$ can be attained.
\end{lemma}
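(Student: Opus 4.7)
The statement is the sphere analogue of Lemma 2.1 in this paper, and my plan is to mirror that proof, following also the templates in \cite[Lemma 2.2]{Konig2025} and \cite[Lemma 3.3]{DEFFL2025}. The identity $\dist(f,\mathcal{C})^2 = \|f\|_{L^2(\bS^2)}^2 - m(f)$ should follow from a direct expansion: parametrize $g \in \mathcal{C}$ as $g = \lambda h$ with $\lambda > 0$ and $h = (4\pi)^{-1/2} e^{iy\theta} \in \mathcal{C}_1$, so that
\[
\|f-g\|_{L^2(\bS^2)}^2 = \|f\|_{L^2(\bS^2)}^2 - 2\lambda\,\Re \langle f,h\rangle_{L^2(\bS^2)} + \lambda^2.
\]
For each $h$, optimizing the quadratic in $\lambda > 0$ contributes $-(\Re\langle f,h\rangle)^2$, provided one absorbs a unimodular phase into $h$ so that $\Re\langle f,h\rangle \geq 0$; this is legitimate because the $L^2$-normalized manifold $\mathcal{C}_1$ is effectively stable under $h \mapsto e^{i\alpha} h$ once scaling is factored out of the modulation symmetry. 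Taking the supremum over $h \in \mathcal{C}_1$ then yields the claimed identity.

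For the attainment statement, fix a nonzero $f \in L^2(\bS^2)$ and reduce the problem to showing that the functional $y \mapsto |\widehat{f\sigma}(y)|^2$ attains its supremum over $\bR^3$. Two standard ingredients suffice: \emph{continuity} of $y \mapsto \widehat{f\sigma}(y)$ on $\bR^3$, which follows from dominated convergence using $f \in L^2(\bS^2) \subset L^1(\bS^2)$ (finite surface measure), together with \emph{decay at infinity} $|\widehat{f\sigma}(y)| \to 0$ as $|y| \to \infty$, the Riemann--Lebesgue lemma applied to the finite Borel measure $f\,d\sigma$ on $\bR^3$. Combined, these force the supremum of $|\widehat{f\sigma}|^2$ to be attained on some compact ball, producing a maximizing $y_* \in \bR^3$ and hence the minimizing element $g_* \in \mathcal{C}$ (a scalar multiple of $e^{iy_*\theta}$ with phase tuned so that $\langle f, g_*\rangle \geq 0$).

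The only ingredient requiring some care is the Riemann--Lebesgue decay for the singular measure $f\,d\sigma$ on $\bR^3$; this will be handled by a density argument, noting that for smooth $f_n \in C^\infty(\bS^2)$ the stationary-phase method gives $|\widehat{f_n\sigma}(y)| \lesssim (1+|y|)^{-1}$, and approximation of a general $f \in L^1(\bS^2)$ by such $f_n$ transfers the $o(1)$ bound to $\widehat{f\sigma}$. Everything else is a routine Hilbert-space manipulation, exactly parallel to the Gaussian case of Lemma 2.1, so no essentially new difficulty is expected.
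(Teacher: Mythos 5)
Your proof is correct and follows essentially the same route the paper points to: a direct quadratic expansion in $\lambda$ gives the identity, and attainment reduces to the continuity and decay at infinity of $y\mapsto\widehat{f\sigma}(y)$, which you establish by a density-plus-stationary-phase argument (the paper's cited ``approximate identity'' route serves the same purpose). One caveat worth stating explicitly: as literally written the paper takes $\lambda\in\bR_+$ in the definition of $\mathcal{C}$, which would not license your phase-absorption step and would leave the case $\sup_h\Re\langle f,h\rangle\le 0$ unaccounted for; however, that written definition is inconsistent with the paper's own five-dimensional tangent space $T_1\mathcal{C}=\mathrm{span}_{\bR}\{1,i,ix_1,ix_2,ix_3\}$, which already contains the phase direction $i$, so your reading (complex, or at least signed, scalars) is clearly the intended one.
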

\begin{proof}[\textbf{Proof of Lemma \ref{L:Constant distance representation}}]
	The first part of this lemma follows from a direct expansion, and the second part can be shown by applying an approximate identity argument \cite[Section 1.2.4]{Grafakos2014a}. For the sake of brevity, we omit the detailed proof here. Further details can be found in \cite[Lemma 2.2]{Konig2025} and \cite[Lemma 3.3]{DEFFL2025}.
\end{proof}
\begin{proof}[\textbf{Proof of Proposition \ref{P:Sphere-two-peak}}]
	First, by the sphere profile decomposition as shown in \cite{CS2012A&P,FLS2016,FS2024}, one can see that
	\begin{equation} \label{E:Sphere-two-peak-1}
		\|f_y\|_{L^2(\bS^2)}^2 = 2\|1\|_{L^2(\bS^2)}^2 +o_{|y|\to \infty}(1) = 8\pi +  o_{|y|\to \infty}(1),
	\end{equation}
	and
	\begin{equation} \label{E:Sphere-two-peak-2}
		\|\widehat{f_y \sigma}\|_{L^4(\bR^3)}^4 = 2 \mathbf{M}^4 \|1\|_{L^2(\bS^2)}^4 +o_{|y|\to \infty}(1) = 512 \pi^6 + o_{|y|\to \infty}(1).
	\end{equation}
	For the item $\dist(f,\mathcal{C})^2$, by the constant distance representation Lemma \ref{L:Constant distance representation}, we investigate
	\[m(f_y)= \sup_{x\in \bR^3} \l(\int_{\bS^2} f_y(\theta) e^{ix\theta} \ddd \sigma(\theta)\r)^2 \Big/ \|e^{ix\theta}\|_{L^2(\bS^2)}^2 =\frac{1}{4\pi} \sup_{x\in \bR^3} \l(\int_{\bS^2} \l(1+e^{iy\theta} \r) e^{ix\theta} \ddd \sigma(\theta)\r)^2.\]
	By a direct computation, we conclude
	\[m(f_y)= 4\pi \sup_{x\in \bR^3} \l(\frac{\sin|x|}{|x|} + \frac{\sin|x+y|}{|x+y|}\r)^2.\]
	Then by an elementary analysis, one can obtain
	\[\lim_{|y|\to \infty} m(f_y) = 4\pi.\]
	Hence, due to Lemma \ref{L:Constant distance representation}, we obtain the limit behavior estimate
	\begin{equation} \label{E:Sphere-two-peak-3}
		\dist(f_y,\mathcal{C})^2 = 4\pi +o_{|y|\to \infty}(1).
	\end{equation}
	Combining the estimates \eqref{E:Sphere-two-peak-1} and \eqref{E:Sphere-two-peak-2} and \eqref{E:Sphere-two-peak-3}, one can see that
	\[\frac{\mathbf{M}^2 \l\|f_y \r\|_{L^2(\bS^2)}^2 -\l\|\widehat{f_y \sigma} \r\|_{L^4(\bR^3)}^2}{\dist(f_y,\mathcal{C})^2} = (2-\sqrt{2})4\pi^2 +o_{|y|\to \infty}(1).\]
	This gives the desired result and completes the proof.
\end{proof}

\subsection{Compactness of minimizing sequences}
In this subsection, based on the spectral-gap result Proposition \ref{P:Sphere-spectral-gap} and two-peak vanishing result Proposition \ref{P:Sphere-two-peak}, we show the desired existence of minimizers for the two-dimensional sphere stability constant $C_{**}$.
\begin{proof}[\textbf{Proof of Theorem \ref{T:Stability minimizer-2D sphere}}]
	The strategy is similar to the \cite[Section 4, Proof of Theorem 1.2]{Konig2025} and the Proof of Theorem \ref{T:Stability minimizer-paraboloid} in the previous Section \ref{SubS:Conditional compactness-paraboloid}. For simplicity, we only show the main ideas and omit the details. In this two-dimensional sphere case, we can obtain the unconditional result since we can establish the strict inequality in Proposition \ref{P:Sphere-spectral-gap}. Therefore, by following the outline in the Proof of Theorem \ref{T:Stability minimizer-paraboloid}, it remains only to check that
	\[\mathbf{M}^2 -C_{SG*} = 4\pi^2 -\frac{8\pi^2}{5} =\frac{12\pi^2}{5}>0,\]
	which essentially can give the nonzero weak limit of minimizing sequences
	\[f_n \rightharpoonup f_0 \;\; \text{in} \;\; L^2(\bS^2), \quad f_0\neq 0.\]
	Then the desired existence of minimizers follows from standard arguments.
\end{proof}

\bigskip\bigskip
\newcommand{\etalchar}[1]{$^{#1}$}

\end{document}